\documentclass{amsart}
\usepackage{amsmath,amssymb,amsthm,microtype,graphicx,array}
\usepackage[hidelinks]{hyperref}
\usepackage[nameinlink]{cleveref}

\let\cref\Cref

\usepackage{enumitem}

\newcommand{\ie}{i.\,e.~}
\newcommand{\eg}{e.\,g.~}

\newcommand{\Z}{\mathbb{Z}}

\newcommand{\R}{\mathbb{R}}
\newcommand{\CC}{\mathcal{C}}
\newcommand{\pr}{^{\prime}}

\usepackage{xcolor}

\title{The Conway knot has infinite concordance order}
\author{Chiara Donatone}
\address{}
\email{donatonechiara@gmail.com}
\author{Marc Kegel}
\address{Universidad de Sevilla, Dpto.\ de Álgebra,
41012 Sevilla, Spain}
\email{kegelmarc87@gmail.com}
\urladdr{\url{https://marckegel.github.io}}
\author{Lukas Lewark}
\address{Department of Mathematics, ETH Zurich, 8092 Zurich, Switzerland}
\email{lukas.lewark@math.ethz.ch}
\urladdr{\url{https://people.math.ethz.ch/~llewark/}}
\author{Paula Truöl}
\address{School of Mathematics and Statistics, University of Glasgow, University Place, Glasgow, G12 8QQ, United Kingdom}
\email{paula.truol@glasgow.ac.uk,
paulagtruoel@gmail.com}
\urladdr{\url{https://paulatruoel.github.io}}

\newtheorem{theorem}{Theorem}
\newtheorem{lemma}[theorem]{Lemma}

\newtheorem{prop}[theorem]{Proposition}
\newtheorem{definition}[theorem]{Definition}
\newtheorem{question}[theorem]{Question}

\makeatletter
\newtheorem*{rep@theorem}{\rep@title}
\newcommand{\newreptheorem}[2]{%
\newenvironment{rep#1}[1]{%
 \def\rep@title{#2 \ref{##1}}%
 \begin{rep@theorem}}%
 {\end{rep@theorem}}}
\makeatother

\newreptheorem{theorem}{Theorem}
\newreptheorem{prop}{Proposition}

\theoremstyle{remark}
\newtheorem{remark}[theorem]{Remark}

\DeclareMathOperator{\lk}{lk}

\newcommand{\qua}{\hskip 0.4em \ignorespaces}
\def\arxiv#1{\relax\ifhmode\unskip\qua\fi
\href{http://arxiv.org/abs/#1}%
{\tt arXiv:\penalty -100\unskip#1}}

\def\MR#1{\relax\ifhmode\unskip\qua\fi
\href{https://mathscinet.ams.org/mathscinet-getitem?mr=#1}{\tt MR#1}}
\def\ZB#1{\relax\ifhmode\unskip\qua\fi
\href{https://zbmath.org/?q=an:#1}{\tt Zbl\:#1}}
\def\xox#1{\csname xx#1\endcsname}

\renewenvironment{thebibliography}[1]{
  \begin{oldthebibliography}{ABC12}\small
    \setlength{\itemsep}{.5ex}
    \setlength{\parskip}{0em}
}
{
  \end{oldthebibliography}
}

\begin{document}
\begin{abstract}
We examine how the Rasmussen invariant, satellite operations, and null-homologous twists can be used to establish infinite order of knots in the smooth concordance group. As an application, we show that the Conway knot has infinite concordance order.
\end{abstract}
\maketitle

\section{Introduction}

A knot in the $3$-sphere $S^3$ is \emph{slice} if it bounds a smoothly and properly embedded disk in $B^4$, the $4$-ball bounded by $S^3$. Two knots $K$ 
and $J$ are \emph{concordant} if the connected sum $K \# {-J}$ is slice. Here $-J$ is the \emph{inverse} of~$J$, which is the image of $J$ under an orientation-reversing diffeomorphism of~$S^3$ with the opposite orientation on the knot. Knots up to concordance form an abelian group, the \emph{concordance group} $\CC$. The group operation is induced by connected sum, slice knots represent the identity element, and the additive inverse of a concordance class $[K]$ is $[-K]$. While the study of the group~$\CC$ has advanced in recent years, particularly since the advent of Heegaard Floer and Khovanov homology theories, its structure---let alone its isomorphism type---remains mysterious. In particular, the structure of the torsion subgroup of~$\CC$ is an open problem (see \eg \cite[Problem~1.38]{K3_list}). The \emph{concordance order} of a knot $K$ is its order in~$\CC$, \ie the least positive integer $n$ for which the connected sum $K^{\#n}$ of~$n$ copies of~$K$ is slice, or infinity if no such $n$ exists.

\begin{theorem}\label{thm:Conway_inf_order}
The Conway knot has infinite concordance order.
\end{theorem}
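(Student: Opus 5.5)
The plan is to argue by contradiction and detect $C^{\#n}$, for every $n\ge 1$, with an obstruction built from the Rasmussen invariant. Here $C$ denotes the Conway knot. The basic invariants are useless by themselves. $C$ is a mutant of the slice Kinoshita--Terasaka knot, so its Alexander polynomial, signatures, and the Casson--Gordon-type data that mutation preserves all vanish. One also has $s(C)=0$, and $s$ is additive, so $s(C^{\#n})=0$. We therefore have to pass to an auxiliary knot on which $s$ can see the difference.

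\textbf{Step 1: move to a satellite.} Suppose $C^{\#n}$ is slice for some $n\ge1$. Then for any pattern $P$, the satellite $P(C^{\#n})$ is concordant to $P(U)$. Hence
\[
s\bigl(P(C^{\#n})\bigr)=s\bigl(P(U)\bigr),
\]
and the right-hand side is independent of $n$. I would choose $P$ to be a simple pattern such as a $(2,\pm1)$-cable or a twisted Whitehead-double pattern, so that $s(P(U))$ is known.

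\textbf{Step 2: null-homologous twists.} Inside each of the $n$ copies of $C$, I would locate an unknotted curve $c$ in the complement of $C$ with $\lk(c,C)=0$. A single full twist along $c$ should turn $C$ into a knot $J$ with $s(J)>0$. The natural candidates come from Piccirillo's trace-partner analysis, for instance a twist exhibiting the crossing change in her companion knot $K'$, which has nonzero $s$. In the satellite, the curve $c$ is still null-homologous, since the pattern sits inside a tubular neighbourhood and the linking number only gets multiplied by the winding number. So performing the $n$ twists turns $P(C^{\#n})$ into $P(J^{\#n})$.

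\textbf{Step 3: compare $s$ before and after twisting.} A negative null-homologous twist is realised by a blow-up. This gives a genus-zero cobordism between the two knots in a punctured $\overline{\mathbb{CP}}^2$ (or $\mathbb{CP}^2$, depending on sign) with null-homologous class. The known adjunction-type inequality for $s$ in such manifolds (Manolescu--Marengon--Sarkar--Willis) then yields a one-sided bound. Choosing signs correctly, I expect
\[
s\bigl(P(C^{\#n})\bigr)\;\ge\; s\bigl(P(J^{\#n})\bigr)-\text{const},
\]
or possibly with no constant at all.

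\textbf{Step 4: show growth, which I expect to be the main obstacle.} It remains to show that $s(P(J^{\#n}))$ grows without bound in $n$. That would contradict Step~1 once $n$ is large; for small $n$ the same bound, or a direct computation, still gives the contradiction.

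The difficulty is that $s$ of a satellite is not additive in the companion, so a single computer calculation for $n=1$ does not suffice. My approach would be to use the known satellite inequalities for $s$, of the form $s(P(K))\ge s(P(U))+w\cdot s(K)-\text{const}$ for cables or suitably twisted patterns. Combined with $s(J^{\#n})=n\,s(J)$, this would give linear growth.

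If that bound is too weak for the chosen pattern, the fallback is to choose $P$ so that $P(J)$ itself becomes computable. The idea is to replace the twist on the companion by a twist directly on the satellite $P(C)$, and then work with connected sums $P(C)^{\#n}$, where additivity of $s$ applies. The final verification, that $s$ of the twisted knot is positive, would be done with a Khovanov homology computer program.
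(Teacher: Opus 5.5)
There is a genuine gap in Steps 2--4. Step 1 is fine, and so is your observation that a null-homologous twist on the companion induces one on the satellite (this is \cref{lem:monotonesattelite}). The first problem is a sign inconsistency. The Manolescu--Marengon--Sarkar--Willis adjunction inequality is one-sided: it gives $s(K)\le s(J)$ when $K$ is obtained from $J$ by a right-handed, i.e.\ $(-1)$-surgery, null-homologous twist, and nothing in the other direction.

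\begin{itemize}
\item Your Step 3 bound $s(P(C^{\#n}))\ge s(P(J^{\#n}))$ therefore requires $J$ to arise from $C$ by right-handed twists. But then already $s(J)\le s(C)=0$ (trivial pattern, $n=1$), which contradicts Step 2. If $J$ arises by a left-handed twist instead, the inequality points the wrong way.
\item A two-sided estimate ``up to a constant'' does hold for one twist on a fixed number of strands. However, you perform $n$ twists, each on $w$ times as many strands after satelliting, so the available error term grows linearly in $n$. Since $s(J)=s(J)-s(C)$ is itself bounded by a single-twist error, nothing makes the gain $n\,w\,s(J)$ in Step 4 beat this error.
\item $K'$ is not obtained from $C$ by a twist; it shares its $0$-trace with $C$.
\item The fallback is unrelated to the problem, because sliceness of $C^{\#n}$ says nothing about $P(C)^{\#n}$.
\item The simple patterns of Step 1 fail even for $n=1$. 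The paper's closing remark records $s_p(P_{\pm}(C))=0$. By Lewark--Zibrowius this forces $s_p(P(C))=s_p(P(U))$ for twisted Whitehead patterns and for wrapping-number-two patterns of winding number $\pm2$.
\end{itemize}

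The paper needs no growth at all, thanks to two ideas missing from your plan.

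First, $C$ has unknotting number one, so the unknotting crossing change gives $[U]\preceq[C]$ or $[C]\preceq[U]$. By translation invariance and the compatibility of satellites with twists, $n\mapsto s(P(C^{\#n}))$ is then monotone for \emph{every} pattern $P$. Hence a single strict inequality $s(P(\pm C))\neq s(P(U))$ rules out sliceness of all $C^{\#n}$ at once (\cref{thm:main_detailed}).

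Second, that strict inequality is obtained without computing $s$ of any satellite of $C$. The Piccirillo pattern $P_{C,c}$, built from an RBG link, has the property that $C\#J$ and $P_{C,c}(J)$ share their $0$-trace for every $J$. So $P_{C,c}(-C)$ is slice by the trace embedding lemma, while $P_{C,c}(U)$ is Piccirillo's knot $K'$ with $s(K')\neq 0$. Thus $s(P_{C,c}(-C))=0\neq s(P_{C,c}(U))$, and monotonicity gives $s(P_{C,c}((-C)^{\#n}))\neq s(P_{C,c}(U))$ for all $n\ge 1$.
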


It had previously been known that the Conway knot $C$ is not slice, as demonstrated in the celebrated work of Piccirillo~\cite{piccirillo:Conway}. Piccirillo's proof uses a clever construction involving finding a knot $K\pr$ with the same $0$-trace as $C$. According to the trace embedding lemma (see \Cref{sec:twists}), this gives $K\pr$ the same sliceness status as $C$. However, computing Rasmussen's $s$ invariant~\cite{rasmussen} for $K\pr$ shows that $K\pr$ is not slice. 

One difficulty in determining the non-sliceness of the Conway knot was that all known sliceness invariants vanished for this knot; in particular, all known additive invariants, which could otherwise have been used to prove \Cref{thm:Conway_inf_order}, vanish on~$C$. For this reason, we use an additional technique called \emph{twisting}.

\begin{definition}\label{def:twist}
For knots $J, K\subset S^3$, we say that $K$ is obtained from $J$ by a \emph{right-handed null-homologous twist},
or simply, a \emph{twist}, if there is an unknot $\alpha \subset S^3\setminus J$ with $\lk(\alpha, J) = 0$ such that $(-1)$-surgery on $S^3$ along $\alpha$ turns $J$ into~$K$. 
We say that a real-valued knot invariant $y$ satisfies the \emph{twist inequality} if $y(K) \leq y(J)$ whenever~$K$ is obtained from $J$ by a twist.
\end{definition}

For example, a positive-to-negative crossing change is a special type of twist. (This is why twisting is also called a \emph{positive-to-negative generalised crossing change}, or \emph{adding a generalised negative crossing} \cite{zbMATH06339370}.) 
Alongside many other knot concordance invariants, the Rasmussen $s$ invariant satisfies the twist inequality (see \Cref{sec:twists}). Another key element in our proof of \Cref{thm:Conway_inf_order} is the behaviour of satellite operations under twists. Recall that a \emph{pattern}~$P$ is a knot in the standard solid torus~$S^1 \times D^2$, and $P(K)$ denotes the satellite knot with pattern~$P$ and companion~$K$; see, for example, \cite[Section 4D]{zbMATH00877622}. 

\begin{theorem}\label{thm:twist_ineq}
Let $J, K\subset S^3$ be two knots that can be transformed into the unknot~$U$ via a finite sequence of concordances and twists. If a knot concordance invariant $y$ exists that satisfies the twist inequality and a pattern $P$ such that ${y(P(K)) \neq y(P(U))}$, then $K \# J$ is not slice. In particular, $K$ has infinite concordance order.
\end{theorem}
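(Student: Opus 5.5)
The plan is to assume $K \# J$ is slice and show this forces $y(P(K)) \le y(P(U))$, while the hypotheses force $y(P(K)) \ge y(P(U))$. Together these give $y(P(K)) = y(P(U))$, contradicting $y(P(K)) \neq y(P(U))$. The main tool is the following lemma.

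\emph{Key lemma: satellites carry twists to twists and concordances to concordances.} Suppose $K'$ is obtained from $J'$ by a twist along an unknot $\alpha$ with $\lk(\alpha,J')=0$.
\begin{enumerate}
\item Isotope $\alpha$ off a tubular neighbourhood $N(J')$; this is possible since $\alpha$ is disjoint from $J'$.
\item The $(-1)$-surgery along $\alpha$ is a diffeomorphism of $S^3$ outside $N(\alpha)$. It sends $N(J')$ to a tubular neighbourhood of $K'$. Since $\lk(\alpha,J')=0$, it carries the Seifert longitude of $J'$ to the Seifert longitude of $K'$, so the untwisted framing is preserved.
\item Hence the image of $P(J') \subset N(J')$ is $P(K')$.
\item We have $\lk(\alpha, P(J')) = w \cdot \lk(\alpha, J') = 0$, where $w$ is the winding number of $P$.
\end{enumerate}
So $P(K')$ is obtained from $P(J')$ by a twist. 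It is standard that concordant companions give concordant satellites: apply $P$ fibrewise to a tubular neighbourhood $S^1\times D^2\times I$ of the concordance annulus. Consequently $y\circ P$ is a concordance invariant satisfying the twist inequality.

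\emph{Lower bound.} Following the given sequence of concordances and twists from $K$ to $U$, $y(P(\cdot))$ is unchanged at each concordance and does not increase at each twist. Hence $y(P(U)) \le y(P(K))$.

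\emph{Upper bound from sliceness.} Suppose $K\#J$ is slice. Realise the sequence from $J$ to $U$ inside the $J$-summand of $K\#J$. Concretely, place $J$ in a ball disjoint from $K$ and perform the twists with curves $\alpha$ inside that ball. These curves still have linking number $0$ with $K\#J$. The concordances also take place in that ball, so they extend to concordances of the connected sum. This transforms $K\#J$ into $K\#U = K$ via concordances and twists, so $y(P(K)) \le y(P(K\#J))$. Since $K\#J$ is slice, it is concordant to $U$, so $y(P(K\#J)) = y(P(U))$. This gives the contradiction described at the start.

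\emph{Infinite order.} Suppose $K^{\#n}$ is slice for some $n\ge 1$. The case $n=1$ is the case $J=U$, which trivially satisfies the hypothesis. For $n \ge 2$, take $J = K^{\#(n-1)}$. Applying the sequence for $K$ to each summand in turn, inside disjoint balls, transforms $J$ into $U$, so $J$ satisfies the hypothesis. The first part then shows $K^{\#n} = K\#J$ is not slice, a contradiction.

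The main obstacle is the key lemma, specifically checking that the twist preserves the zero-framing of the companion, so that the image of $P(J')$ is exactly $P(K')$ with the correct satellite framing. This is precisely where the null-homologous condition $\lk(\alpha,J)=0$ is used.
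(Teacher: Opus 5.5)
Your proposal is correct and follows essentially the same route as the paper: you use monotonicity of $y\circ P$, translation-invariance to get $y(P(K))\le y(P(K\#J))$, and $J=K^{\#(n-1)}$ for infinite order. The paper cites the fact that satellites carry twists to twists (McCoy; Lewark--Zibrowius) rather than proving it as you do, and it derives the strict inequality $y(P(U))<y(P(K\#J))$ directly instead of arguing by contradiction.
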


To prove \Cref{thm:Conway_inf_order} using \Cref{thm:twist_ineq}, we must first find a suitable pattern~$P$. Building on Piccirillo's work~\cite{piccirillo:shake-genus,piccirillo:Conway}, we construct for all knots~$K$ with unknotting number~$1$ and unknotting crossing~$c$ a pattern~$P_{K,c}$ such that the satellite knot~$P_{K,c}(J)$ has the same $0$-trace as~$K \#J$ for every knot~$J$. By the trace embedding lemma, the sliceness statuses of $K\#J$ and $P_{K,c}(J)$ are identical. 

Experiments suggest that $P_{K,c}(U) \neq K$ unless $K$ is a twist knot or a twisted Whitehead double (see, for example, \cite[Section~5]{KS_exotic_traces}). We are particularly interested in the pattern~$P_{K,c}$ when it additionally satisfies~$s(P_{K,c}(U))\neq 0$.

\begin{prop}\label{cor:inf_order}
    Let $K$ be a knot with unknotting number~$1$ and unknotting crossing~$c$. If $s(P_{K,c}(U))\neq 0$, then $K$ has infinite concordance order.
\end{prop}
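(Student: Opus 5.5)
We plan to deduce \Cref{cor:inf_order} from \Cref{thm:twist_ineq}, applied with $y = s$ (or $y=-s$, as needed) and the pattern $P = P_{K,c}$. We take for granted that $s$ is a concordance invariant satisfying the twist inequality; this is stated in the introduction and will be justified in \Cref{sec:twists}. Since $-s$ then satisfies the reversed inequality, we really only use $s$ itself, with $y=s$. \Cref{thm:twist_ineq} needs two inputs: that $K$ (and some $J$) can be transformed into the unknot by concordances and twists, and that $s(P_{K,c}(K)) \neq s(P_{K,c}(U))$.

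\textbf{Reduction to the unknot.} Since $K$ has unknotting number~$1$, a single crossing change at $c$ turns $K$ into $U$. If the crossing $c$ is positive, this change is a twist. If it is negative, we instead view the change $U \to K$ as the twist, after mirroring. To get the right direction in all cases, we will use the statement of \Cref{thm:twist_ineq} in its symmetric form, where $J,K$ are related to $U$ by a \emph{sequence} of concordances and twists. A crossing change in either direction fits this if sequences may traverse twists both ways. Should the theorem require directed twists, we handle the negative-crossing case by passing to the mirror $\overline{K}$. This mirror has unknotting crossing $\bar c$ of the opposite sign, and $P_{\overline K,\bar c}$ is the mirror pattern, so $s$ changes sign, which preserves the hypothesis $s\neq 0$. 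In both cases we take $J=K$.

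\textbf{Computing $s(P_{K,c}(K))$ versus $s(P_{K,c}(U))$.} This is the main obstacle. By the construction of $P_{K,c}$, the knot $P_{K,c}(J)$ has the same $0$-trace as $K\#J$ for every $J$. Taking $J=-K$, the knot $P_{K,c}(-K)$ has the same $0$-trace as $K\#-K$, which is slice. By the trace embedding lemma, $P_{K,c}(-K)$ is therefore slice, so $s(P_{K,c}(-K)) = 0 \neq s(P_{K,c}(U))$. So we should apply \Cref{thm:twist_ineq} with companion $-K$ rather than $K$. The knot $-K$ also has unknotting number~$1$, so the reduction above applies to it as well. The theorem then yields that $(-K)\#J'$ is not slice for the relevant $J'$, and in particular that $-K$, and hence $K$, has infinite concordance order.

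If the hypotheses of \Cref{thm:twist_ineq} turn out to be phrased so that only the companion's connection to $U$ matters, we can avoid the theorem and argue directly. Suppose $K^{\#n}$ is slice. Then $-K$ is concordant to $K^{\#(n-1)}$, so $s(P(K^{\#(n-1)})) = s(P(-K)) = 0$. Now consider the chain $K^{\#(n-1)}\to\cdots\to U$, obtained by unknotting each summand with one crossing change. Since satellite operations carry twists in the companion to twists of the satellite, with $\alpha$ staying null-homologous, the twist inequality gives monotonicity of $s(P(\cdot))$ along this chain. This monotonicity must be compared with the opposite chain from $-K$. The delicate point, which we expect to be the crux, is checking that the signs line up so that these inequalities force $s(P(U))=0$, contradicting the hypothesis.
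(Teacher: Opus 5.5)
Your main line is correct and is essentially the paper's argument. The paper deduces the statement from \Cref{cor:inf_order_pattern}, whose proof is your observation that $P_{K,c}(-K)$ is slice by the trace embedding lemma (so $s(P_{K,c}(-K))=0\neq s(P_{K,c}(U))$), followed by \Cref{thm:main_detailed} applied to the companion $-K$, using $[K]\in\CC_{\operatorname{sgn}(c)}$ and hence $[-K]\in\CC_{-\operatorname{sgn}(c)}$.

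Do discard the ``twists in both directions'' reading: under it every knot qualifies, and the theorem with the trivial pattern would make $K\#{-K}$ non-slice whenever $s(K)\neq 0$. The direction issue also needs no mirroring of Piccirillo patterns (your unverified claim about $P_{\overline K,\bar c}$), because \Cref{thm:main_detailed} holds for $\CC_-$ as well. This also settles your last paragraph: if $c$ is positive and $K^{\#n}$ is slice, then with $P=P_{K,c}$ one gets $0=s(P(-K))\le s(P(U))\le s(P(K^{\#(n-1)}))=0$.
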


According to KnotInfo~\cite{knotinfo}, the only prime knot with at most~$11$ crossings for which the concordance order is currently unknown is the Conway knot $C=K11n34$. \Cref{thm:Conway_inf_order} thus concludes the computation of concordance orders for knots with at most 11 crossings.
However, the following question remains open.

\begin{question}\label{question:Z-summand}
Does the concordance class $[C]$ of the Conway knot $C = K11n34$ generate a $\mathbb{Z}$-summand of~$\mathcal{C}$?
\end{question}
The infinite concordance order of a knot is often demonstrated using knot concordance homomorphisms that do not vanish for the knot in question. In fact, the concordance class $[K]$ of a given knot $K$ generates a $\mathbb{Z}$-summand of~$\mathcal{C}$ if and only if there is a knot concordance homomorphism to $\mathbb{Z}$ that takes the value~$\pm 1$ on $K$. We do not know if the Conway knot admits such a homomorphism.

While finishing this paper, the authors learned that Golla--Pinzón-Caicedo are preparing a paper that answers \cref{question:Z-summand} positively, using different methods.

\subsection*{Organisation}
We prove \Cref{thm:twist_ineq} in \Cref{sec:twists}. \Cref{sec:picc_patterns} is devoted to constructing and defining suitable patterns $P_{K,c}$, and proving \Cref{cor:inf_order} and \Cref{thm:Conway_inf_order}. 
\Cref{sec:experiments} describes our experiments with knots of low crossing number.

\subsection*{Acknowledgements}
We thank Peter Feller, Lisa Piccirillo, and Claudius Zibrowius for helpful remarks, Marco Golla for comments on a first draft, and Livio Ferretti for useful discussions on related topics. 
Part of this work was carried out when CD and MK visited~LL at ETH Zürich. We thank ETH for the hospitality. CD and PT would like to thank the Max Planck Institute for Mathematics in Bonn for the hospitality during the initial stages of this project.
MK is supported by a Ram\'on y Cajal grant \mbox{(RYC2023-043251-I)} and PID2024-157173NB-I00 funded by MCIN/AEI/10.13039/5\-01100011033, by ESF+, and by FEDER, EU; and by a VII Plan Propio de Investigación y Transferencia (SOL2025-36103) of the University of Sevilla. PT acknowledges the support of the Swiss National Science Foundation Postdoc.Mobility fellowship 230329, and thanks the University of Glasgow for its hospitality and support.

\section{Twists, monotonicity, and non-sliceness}\label{sec:twists}

Unless stated otherwise, all manifolds are assumed to be oriented, compact, and smooth throughout. We consider manifolds up to orientation-preserving diffeomorphism and links up to isotopy. Let us begin with the definition of a relation on the concordance group $\mathcal{C}$.
Similar relations have been considered \eg in~\cite{zbMATH04096384,zbMATH06198082,zbMATH06339370}.

\begin{definition}
For concordance classes of knots $J, K\subset S^3$, we write $[K] \preceq [J]$ if $K$ is obtained from $J$ by a finite sequence of concordances and twists (see \Cref{def:twist}).
\end{definition}

\begin{remark}
Equivalently, $[K] \preceq [J]$ if and only if there is a smooth, proper, null-homologous embedding $\Sigma\colon S^1\times [0,1] \to (S^3 \times [0,1]) \# (\mathbb{C}P^2)^{\#m}$ for some $m \geq 0$ such that $\Sigma(S^1\times\{0\}) = -K \subset S^3 \times \{0\}$ 
and $\Sigma(S^1\times\{1\}) = J \subset S^3 \times \{1\}$. Similar statements are well known, see e.g.~\cite[Theorem~5.7]{zbMATH06339370}. Since we are not going to rely on this characterisation, we only provide a proof sketch. For the `only if' direction, note that if $K$ is obtained from $J$ by a twist, then blowing up once yields a cobordism as above with $m = 1$; composing concordances and such cobordisms produces the desired~$\Sigma$. For the `if' direction, decompose a given~$\Sigma$ as a sequence of cups, saddles, caps, and blow-ups of the ambient manifold. Up to isotopy, we can assume that cups come first and caps last, and we can permute saddles and blow-ups arbitrarily. In this way, we may obtain~$K$ from~$J$ by a sequence consisting of a concordance, $m$ twists, and another concordance. (In fact, the first concordance is ribbon, and the second concordance is ribbon when seen upside down.)
\end{remark}

The relation $\preceq$ is reflexive and transitive, but not antisymmetric; for instance, $[4_1] \preceq [U]$ and $[U] \preceq [4_1]$ hold for the figure-eight knot~$4_1$. Moreover, $\preceq$ is translation-invariant, \ie for all knots $J$, $K$  and  $M$,
\begin{align*}
    [K] \preceq [J] \quad \Leftrightarrow \quad
[M\#K] \preceq [M\#J].
\end{align*}

Next, we consider functions $y\colon \CC\to\mathbb{R}$ (not necessarily homomorphisms) that are \emph{monotone}, \ie that satisfy 
\begin{equation}\label{eq:ypreserves}
[K] \preceq [J] \quad \Rightarrow \quad y(K) \leq y(J).
\end{equation}

Our primary interest lies in the Rasmussen invariant $s\colon \CC\to\mathbb{Z}$~\cite{rasmussen}, which is monotone. More precisely, there is a Rasmussen invariant $s_p$ for each possible characteristic $p$ of the underlying field (with the original Rasmussen invariant~$s$ equal to~$s_0$). All of them are monotone, since they satisfy an adjunction inequality in the punctured connected sum of~$\overline{\mathbb{C}P}^2$s, as shown in \cite[Corollary~1.9 and Theorem~1.11]{manolescu-marengon-sarkar-willis} and \cite[Corollaries 1.4 and~1.5]{ren:lee_torus_links}.

\begin{remark}
Other monotone invariants include:
\begin{itemize}
\item the spatially refined versions of~$s$~\cite{ren2025adjunctioninequalityspatiallyrefined},
\item the invariants $\vartheta_{s_p}$, which control $s_p$ of twisted Whitehead doubles \cite[Prop.~2.34]{zbMATH07939044},
\item the invariant $s^{\#}$ \cite{zbMATH06272212},
\item the invariant~$\tau$ \cite[Theorem~1.1]{zbMATH02057402},
\item the invariant $\nu$ from \cite{os:rational_surgeries}, see \cite[Theorem~4.7]{zbMATH07408046},
\item the invariants $\nu^+$, $-\Upsilon(t)$, $-d(S^3_{p/q}(\cdot),i)$ and $V_k$ from \cite{hom_wu,os:conc_homos,os:d_inv,rasmussen:hi,ni-wu} for $t\in[0,2]$, coprime integers $p,q > 0$ and $0 \leq i \leq p-1$, and $k \in \Z_{\geq 0}$, which follows from Theorem~1.2 in \cite{SATO2018113} for~$\nu^+$, and from combining Proposition~1.5~(3) and Theorem~1.6~(1) in the same paper for the other mentioned invariants,
\item and $-\sigma_\omega$, for $\sigma_\omega$ the classical Levine--Tristram signatures from~\cite{Levine1969,tristram_1969} for $\omega \in S^1$ with non-vanishing Alexander polynomial $\Delta(\omega) \neq 0$; see, for example, \cite[Theorem~3.16]{zbMATH04096384}.
\end{itemize}
Monotonicity appears to be unknown for $\mathfrak{sl}_n$-versions of the Rasmussen invariant for~$n\geq 3$ as defined in~\cite{zbMATH05614872,zbMATH05550658}.
Monotonicity does not hold for the Heegaard Floer $\varepsilon$-invariant \cite{zbMATH06366498}. For example, for the satellite knot $K$ with the Mazur pattern and companion the negative trefoil knot, we have $[K] \preceq [U]$ by \cref{lem:monotonesattelite} below. However, $\varepsilon(K) = 1$ by~\cite{zbMATH06700148}.
\end{remark}

\begin{remark}
For more general definite four-manifolds, an adjunction equality, as it is known for $\tau$ and $s^{\#}$~\cite{zbMATH02057402,zbMATH06272212}, has not been established for $s$; see~\cite[Question~9.7]{manolescu-marengon-sarkar-willis}. This is the reason why we consider the relation $\preceq$, which differs from similar relations defined previously via the existence of cobordisms in certain definite four-manifolds~\cite{zbMATH04096384,zbMATH06198082}.
\end{remark}

Every pattern $P$ in the solid torus~$S^1 \times D^2$ gives rise to a function $P\colon\CC \to\CC$, $P([K]) = [P(K)]$, called a \emph{satellite operation}. Satellite operations are also monotone; namely, we have
\begin{equation}\label{eq:Ppreserves}
[K] \preceq [J] \quad \Rightarrow \quad
[P(K)] \preceq [P(J)].
\end{equation}

This is a direct consequence of the following result, which is e.g.~proved in \cite[Lemma~4.1]{zbMATH07442350} or~\cite[Lemma~4.7]{zbMATH07939044}.

\begin{lemma}\label{lem:monotonesattelite}
If $K$ is obtained from $J$ by a twist, then $P(K)$ is obtained from~$P(J)$ by a twist. \qed
\end{lemma}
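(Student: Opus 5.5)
Let $\alpha\subset S^3\setminus J$ be the unknot with $\lk(\alpha,J)=0$ such that $(-1)$-surgery along $\alpha$ turns $J$ into $K$. Write $V=S^1\times D^2$ for the solid torus containing $P$, and let $h\colon V\to \nu(J)$ be the standard untwisted identification onto a tubular neighbourhood of $J$, so that $P(J)=h(P)$. The idea is to use the same curve $\alpha$, viewed as an unknot in the complement of $P(J)$.

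The first step is to push $\alpha$ off $\nu(J)$. This is possible after an isotopy of $\alpha$ in $S^3\setminus J$, since $\nu(J)$ is a thin neighbourhood and $\alpha$ is disjoint from $J$. Then $\alpha\subset S^3\setminus h(V)\subset S^3\setminus P(J)$, and $\alpha$ is still an unknot in $S^3$.

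The second step is to check the linking number. The knot $P(J)$ is homologous in $h(V)$ to $w\cdot J$, where $w$ is the winding number of $P$. Hence
\[
\lk(\alpha,P(J)) = w\,\lk(\alpha,J) = 0 .
\]
So $\alpha$ is an admissible twisting curve for $P(J)$.

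The third step, and the main point to verify, is to identify the result of the surgery with $P(K)$. Performing $(-1)$-surgery on $\alpha$ gives a diffeomorphism $\phi\colon S^3\setminus\nu(\alpha)\cup(\text{surgery solid torus})\to S^3$, and $\phi$ sends $J$ to $K$. Since $\phi$ is supported away from $h(V)$, it carries $h(V)$ onto a tubular neighbourhood of $K$, and it carries $P(J)=h(P)$ to $(\phi\circ h)(P)$. It remains to show that $\phi\circ h$ is the untwisted (Seifert) framing of $K$, i.e.\ that $\phi$ maps the Seifert longitude $\lambda_J$ of $J$ to the Seifert longitude of $K$. The framing is detected by linking number with the core: the surgery changes linking numbers of curves $x,y$ in the complement of $\alpha$ by
\[
\lk'(x,y)=\lk(x,y)+\lk(x,\alpha)\lk(y,\alpha)
\]
up to a sign fixed by the surgery coefficient $-1$. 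Both $\lk(\lambda_J,\alpha)$ and $\lk(J,\alpha)$ vanish, so $\lk(\phi(\lambda_J),K)=\lk(\lambda_J,J)=0$. Thus the framing is preserved, and $\phi(P(J))=P(K)$.

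This completes the argument. The only care needed is in the linking-number formula after surgery, which follows from computing in $H_1$ of the complement, where $\alpha$'s meridian enters with coefficient $\lk(\cdot,\alpha)=0$.
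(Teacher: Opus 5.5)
Your proof is correct. The paper gives no proof of its own; it cites McCoy (Lemma 4.1) and Lewark--Zibrowius (Lemma 4.7), and your argument is the standard one behind those references: keep the twisting circle $\alpha$ outside a thin $\nu(J)\supset P(J)$, note that $\lk(\alpha,P(J))=w\cdot\lk(\alpha,J)=0$, and check via $H_1$ that the $0$-framing of $J$ is carried to the $0$-framing of $K$.

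One wording fix: the surgery identification $\phi$ is not ``supported away from'' $h(V)$, since the twist along the disk bounded by $\alpha$ generally moves $h(V)$. What is true, and all you actually use, is that $h(V)$ lies in the exterior of $\alpha$, where $\phi$ is defined as an embedding into $S^3$.
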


Note that the statement of \cref{lem:monotonesattelite} does not hold if one only considers twists on a fixed number of strands (e.g.~only crossing changes).

It follows from \eqref{eq:ypreserves} and \eqref{eq:Ppreserves} that the composition $$y\circ P\colon\CC \to \mathbb{R}$$ is also monotone for every monotone function $y \colon \CC \to \R$.

We denote by $\CC_+$ the set of those concordance classes $[K]$ in $\CC$ with $[U] \preceq [K]$, and by~$\CC_-$ the set of those~$[K]$ with $[K] \preceq [U]$
(these sets are called $\widetilde{\mathcal P_0}$ and $\widetilde{\mathcal N_0}$ in~\cite{zbMATH06339370},
and a knot $K$ with $[K]\in\CC_{\pm}$ is called positively or negatively slice in~\cite{zbMATH07880416}). Note that~$\CC_{\pm}$ is closed under connected sum due to the translation-invariance of $\preceq$.

\begin{theorem}[{Restatement of \Cref{thm:twist_ineq}}]\label{thm:main_detailed}
Let $[K]\in \CC_+$ and let $P$ be a pattern such that
$y(P(K)) \neq y(P(U))$ for some monotone knot concordance invariant $y$. Then the following hold:
\begin{enumerate}[label=(\alph*)]
\item $K \# J$ is non-slice for all knots $J$ with $[J]\in \CC_+$. \label{item:KJ_non-slice}
\item $K$ has infinite concordance order. \label{item:inf_order}
\end{enumerate}
These statements also hold after replacing both instances of~$\CC_+$ by~$\CC_-$. 
\end{theorem}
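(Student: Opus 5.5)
We argue by contradiction using the monotone function $y\circ P\colon \CC\to\R$, which is monotone by \eqref{eq:ypreserves} and \eqref{eq:Ppreserves}. Since $[U]\preceq[K]$, monotonicity gives $y(P(U))\le y(P(K))$. By hypothesis the two values differ, so in fact $y(P(U))<y(P(K))$. (In the $\CC_-$ case the inequality is reversed: $y(P(K))<y(P(U))$.)

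For \ref{item:KJ_non-slice}, suppose $[J]\in\CC_+$ and $K\#J$ is slice, so $[K\#J]=[U]$. By translation-invariance of $\preceq$, the relation $[U]\preceq[J]$ yields $[K]=[K\#U]\preceq[K\#J]=[U]$. Applying the monotone function $y\circ P$ gives $y(P(K))\le y(P(U))$, contradicting the strict inequality above. In the $\CC_-$ case the same argument runs with all inequalities reversed: $[J]\preceq[U]$ gives $[U]=[K\#J]\preceq[K]$, hence $y(P(U))\le y(P(K))$, again a contradiction. Note that we only need $y(P(\cdot))$ to be a concordance invariant, which holds because satellite operations are well defined on concordance classes.

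For \ref{item:inf_order}, suppose $K^{\#n}$ is slice for some $n\ge 1$. If $n=1$, then $K$ is slice, so $[K]=[U]$ and $y(P(K))=y(P(U))$, contradicting the hypothesis. If $n\ge2$, set $J=K^{\#(n-1)}$. Then $[J]\in\CC_+$ (respectively $\CC_-$), since $\CC_\pm$ is closed under connected sum. Moreover $K\#J=K^{\#n}$ is slice, which contradicts \ref{item:KJ_non-slice}. Hence $K$ has infinite concordance order.

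There is no real obstacle in this argument. The only points needing care are that the $n=1$ case is not covered by \ref{item:KJ_non-slice} unless we allow $J=U$, which is fine since $[U]\in\CC_\pm$ by reflexivity, and that translation-invariance must be applied with the correct summand.
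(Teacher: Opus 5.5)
Your proof is correct and follows essentially the same argument as the paper. You use monotonicity of $y\circ P$ to get a strict inequality, translation-invariance of $\preceq$ to compare $K\#J$ with $K$, and derive (b) from (a) with $J$ a connected power of $K$. Your explicit handling of the $n=1$ case and of closure of $\CC_\pm$ under connected sum only makes explicit a step the paper leaves implicit.
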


\begin{proof}
We first prove \ref{item:KJ_non-slice} assuming $[K]\in \CC_+$. Since $y$ and thus $y \circ P\colon\CC\to\mathbb{R}$ are monotone, we have $y(P(U)) \leq y(P(K))$. The assumption $y(P(U)) \neq y(P(K))$ thus implies $y(P(U)) < y(P(K))$. 
Moreover, we have
\begin{align*}
    [U] \preceq [J]
    \Rightarrow 
    [K] \preceq [K \# J]
    \Rightarrow
    y(P(K)) \leq y(P(K\# J))
    \Rightarrow
    y(P(U)) < y(P(K\# J)).
\end{align*}
It follows that $K\# J$ is not slice, as claimed in \ref{item:KJ_non-slice}. Now \ref{item:inf_order} follows from \ref{item:KJ_non-slice} by taking $J = K^{\#n}$ for any $n \geq 1$. The proof for the case $\CC_-$ works similarly. 
\end{proof}

\begin{remark}
    When applied to any of the Levine--Tristram signatures~$y=-\sigma_\omega$ for $\omega \in S^1 \setminus \{1\}$ with $\Delta_K(\omega) \neq 0$, \Cref{thm:main_detailed} does not provide any new insights on concordance orders. Indeed, the satellite formula for $\sigma_\omega$ \cite[Theorem~2]{litherland} implies that~$\sigma_{\omega^m}(K) \neq 0$ if $\sigma_\omega(P(K)) \neq \sigma_\omega(P(U))$ for some pattern $P$ with winding number~$m$, in which case computing $\sigma_{\omega^m}(K)$ already shows that $K$ has infinite concordance order. 
\end{remark}

\begin{remark}\label{rmk:tau}
    Let us apply \Cref{thm:main_detailed} for $y = \tau$. Let $P_{\pm}$ be the $(2,\pm 1)$-cable pattern, which satisfies $P_{\pm}(U) = U$. Then, for the $\{-1,0,1\}$-valued invariant~$\varepsilon$, we have~\cite[Theorem~1]{zbMATH06366498}
    \[
    \tau(P_{\pm}(K)) =
    \left\{\begin{array}{ll}
    2\tau(K) \pm 1 & \text{if } \varepsilon(K) = \mp 1, \\
    2\tau(K)  & \text{if }\varepsilon(K) \in \{\pm 1, 0\}.
    \end{array}\right.
    \]
    Thus we find $\varepsilon(K) = \mp 1 \Rightarrow \tau(P_{\pm}(K)) \neq \tau(P_{\pm}(U)) = 0$, and so \Cref{thm:main_detailed} does provide a stronger obstruction for finite concordance order than just $\tau$: namely that $K$ has infinite concordance order if $\varepsilon(K) \neq 0$---but this is already well-known~\cite[Proposition~3.6 (6)]{zbMATH06366498}.
    Interestingly, one does not obtain more information for $y=\tau$ by studying further patterns.
    Indeed, if $\varepsilon(K) = 0$, then $\tau(K) = 0$~\cite{zbMATH06366498}
    and $\tau(P(K)) = \tau(P(U))$ for all patterns $P$
    (see \cite[Theorem 5]{zbMATH06361412}, \cite[Theorem~2.30]{zbMATH07939044}).
    In other words, we find
    \begin{multline*}
    \tau(P_+(K)) = \tau(P_+(U))\text{ and }
    \tau(P_-(K)) = \tau(P_-(U)) \\
    \Longleftrightarrow\qquad
    \tau(P(K)) = \tau(P(U)) \text{ for all patterns } P.
    \end{multline*}
    The analogue statement does not hold for the Rasmussen invariants, see \cref{rmk:s}.
\end{remark}

In \Cref{sec:picc_patterns}, we will apply \Cref{thm:main_detailed} to specific patterns that result from a generalisation of one of Piccirillo's constructions from~\cite{piccirillo:shake-genus,piccirillo:Conway}. For a given knot~$K$, the associated pattern $P_K$ will have the property that 
\begin{equation}\label{eq:trace}
    K \#J \text{ and } P_K(J) \text{ have the same $0$-trace for every knot $J$.}
\end{equation}
Recall that the $0$-trace $X_0(K)$ of a knot~$K$ is the oriented, connected, compact, smooth $4$-manifold obtained by attaching a $0$-framed $4$-dimensional $2$-handle to the $4$-ball $B^4$ with attaching sphere $K$. Property~\eqref{eq:trace} is crucial to proving \Cref{cor:inf_order_pattern} below, which we will then use in \Cref{sec:picc_patterns} to prove \Cref{cor:inf_order} from the introduction. 

In the proof of \Cref{cor:inf_order_pattern}, we also make use of the \emph{trace embedding lemma}. This folklore result states that a knot $K$ is slice if and only if its $0$-trace~$X_0(K)$ can be smoothly embedded in $S^4$; see \cite[Lemma~1.3]{piccirillo:Conway}, \cite[Theorem~1.8]{miller-piccirillo} and~\cite{kirby-melvin}. Consequently, two knots $K$ and $K\pr$ with the same $0$-trace $X_0(K) = X_0(K\pr)$ have the same sliceness status. The trace embedding lemma was famously employed in~\cite{piccirillo:Conway} to prove that the Conway knot is not slice. 

\begin{prop}\label{cor:inf_order_pattern}
Let $[K]\in \CC_+$ and let $P_K$ be a pattern satisfying~\eqref{eq:trace}. Suppose that~$y(P_K(U)) \neq 0$ for a monotone knot concordance invariant~$y$. Then~$K \# J$ is non-slice for all knots $J$ with~$[J]\in \CC_{+}$. In particular, $K$ has infinite concordance order. These statements also hold after replacing both instances of~$\CC_+$ by~$\CC_-$. 
\end{prop}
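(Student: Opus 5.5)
The plan is to argue directly with the monotone function $y\circ P_K$ and then transfer the conclusion to $K\#J$ using property~\eqref{eq:trace} and the trace embedding lemma. We treat the case $[K]\in\CC_+$; the $\CC_-$ case is symmetric, with all inequalities reversed.

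First, $y(U)=0$, because $y$ is a concordance invariant and in the relevant normalisation takes the value $0$ on slice knots. Strictly speaking this only needs $y(U)$ to be the value on slice knots. I would phrase the argument so that the comparison is with $y(U)$, which is forced to be $0$ by the hypothesis format. Since $[K]\in\CC_+$ and $[J]\in\CC_+$, translation invariance gives $[U]\preceq[K]\preceq[K\#J]$. We also have $[U]\preceq[J]$. By \eqref{eq:Ppreserves}, applying $P_K$ to both chains yields $[P_K(U)]\preceq[P_K(J)]$.

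The key step is that $P_K(J)$ and $K\#J$ have the same $0$-trace, so by the trace embedding lemma $P_K(J)$ is slice if and only if $K\#J$ is. Suppose for contradiction that $K\#J$ is slice. Then $P_K(J)$ is slice, so $y(P_K(J))=y(U)=0$. Monotonicity of $y$ gives $y(P_K(U))\le y(P_K(J))=0$.

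To get the reverse inequality we need more, and this is the main obstacle. I would use \eqref{eq:trace} with $J=U$: $P_K(U)$ has the same trace as $K$. That alone says nothing about $y$. Instead I would try to use that $K\#J$ slice forces $[U]\preceq [K]\preceq [K\#J]=[U]$, so $[K]$ and $[U]$ are $\preceq$-equivalent. Then \eqref{eq:Ppreserves} gives $[P_K(K)]$ equivalent to $[P_K(U)]$. This is still not directly $0$.

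The cleaner route is to apply the same chain with $J$ replaced by $J\#\overline{J'}$-type manipulations, or more simply as follows. Slice $K\#J$ with $[J]\in\CC_+$ gives $[K]\preceq[K\#J]=[U]$, hence $[K]\sim[U]$. Then $[J]\preceq[K\#J]=[U]\preceq[J]$, so $[J]\sim[U]$ as well. Applying \eqref{eq:Ppreserves} then gives $[P_K(J)]\sim[P_K(U)]$, so $y(P_K(U))=y(P_K(J))=0$, contradicting $y(P_K(U))\neq0$. Hence $K\#J$ is non-slice.

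Infinite order follows by taking $J=K^{\#(n-1)}\in\CC_+$ for $n\ge2$, which is in $\CC_+$ by closure under connected sum. The case $n=1$ is $J=U$, where $K$ slice would give $P_K(U)$ slice by \eqref{eq:trace}, so $y(P_K(U))=0$, a contradiction.
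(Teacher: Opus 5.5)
Your argument is correct, but it takes a different route from the paper's.

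\textbf{The paper's route.} It applies the trace embedding lemma to $K\#{-K}$, which is always slice, and concludes that $P_K(-K)$ is slice, so $y(P_K(-K))=0\neq y(P_K(U))$. It then applies \Cref{thm:main_detailed} to the knot $-K$, which lies in $\CC_-$ because $[K]\in\CC_+$. This shows that $-K\#J'$ is non-slice for all $[J']\in\CC_-$, and passing to inverses gives the $\CC_+$ statement.

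\textbf{Your route.} You apply the trace embedding lemma to the hypothetically slice knot $K\#J$ itself. In place of \Cref{thm:main_detailed}, you observe that $[U]\preceq[J]\preceq[K\#J]=[U]$. So $J$ is $\preceq$-equivalent to $U$, and \eqref{eq:Ppreserves} together with monotonicity forces $y(P_K(U))=y(P_K(J))=0$.

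\textbf{What each buys.} Your version is self-contained and never needs the fact that inversion reverses $\preceq$, which the paper uses without comment. The paper's version exhibits the proposition as an immediate corollary of its main theorem: \eqref{eq:trace} serves only to produce the inequality $y(P_K(-K))\neq y(P_K(U))$ that the theorem requires.

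\textbf{Clean-up for a write-up.}
\begin{itemize}
\item Drop the exploratory detours: the ``main obstacle'' paragraph and the $J\#\overline{J'}$ remark.
\item Your derivation that $K$ is $\preceq$-equivalent to $U$ is never used; only the equivalence for $J$ matters.
\item The separate $n=1$ case is already covered, since $[U]\in\CC_+$.
\item State the normalisation $y(U)=0$ explicitly, as the paper implicitly does, or read the hypothesis as $y(P_K(U))\neq y(U)$. It is not ``forced by the hypothesis format.''
\end{itemize}
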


\begin{proof}
Using the trace embedding lemma, the assumption~\eqref{eq:trace} implies that~$P_K(-K)$ is slice, since $K \# {-K}$ is slice. We thus have $y(P_K(-K)) = 0 \neq y(P_K(U))$. Since~$[K]\in \CC_+$, we have $[-K]\in \CC_-$. \Cref{thm:main_detailed} now implies that $-K \# J$ is non-slice for $[J]\in \CC_{-}$. This implies that $K\# J$ is non-slice for $[J]\in \CC_{+}$, as claimed.
\end{proof}

\section{Piccirillo patterns}\label{sec:picc_patterns}

In this section, we create the pattern $P_{K,c}$ associated with an unknotting crossing~$c$ of a knot $K$ with unknotting number $1$, as promised 
in the introduction, and use it to show that the Conway knot has infinite concordance order. This pattern will arise from an RBG link construction, which is a general method of creating knots that share a trace or a surgery. There are several slightly different versions of this construction, where the main idea is always to describe a surgery diagram or Kirby diagram of a manifold using a $3$-component link such that two pairs of these components cancel each other out. See for example~\cite{Akbulut,BM_non_char,piccirillo:shake-genus,piccirillo:Conway,manolescu-piccirillo:RBG,Tagami,qin,KP_four_surgeries,RBG_high_dim,HPW}. Here, we will use the following variant, which is a special case of Definition~1.1 in~\cite{manolescu-piccirillo:RBG}.

\begin{definition}
An oriented 3-component link $L=R\cup B\cup G$ in $S^3$ is called an \emph{RBG link} if
\begin{enumerate}[label=(\alph*)]
    \item $R\cup B$ is isotopic to $\mu_B\cup B$; 
    \item $R\cup G$ is isotopic to $\mu_G \cup G$,
    where $\mu_B$ and $\mu_G$ denote meridians of $B$ and~$G$, respectively; and
    \item $R$ is labelled with a dot, and $B$ and $G$ are labelled with integers $b$ and $g$, such that $b+g=2\lk(B,G)$.  
\end{enumerate}
\end{definition}

Let $L=R\cup B\cup G$ be an RBG link. Since $R$ is an unknot labelled with a dot, we can interpret $L$ as a Kirby diagram of the $4$-manifold $X_L$ obtained from $B^4$ by attaching a $1$-handle along $R$ (or more precisely by pushing a Seifert disk $D_R$ of $R$ into the $4$-ball and removing an open tubular neighbourhood of it from~$B^4$), and attaching $2$-handles along $B$ and $G$ with framings $b$ and $g$, respectively. See~\cite[Chapters~4 and~5]{gompf-stipsicz:book} for an introduction to Kirby diagrams and Kirby calculus.
We write $B^4\cup h_1(R)\cup h_2(B)$ for the $4$-manifold obtained by just attaching the $1$-handle and the $2$-handle along $B$ to~$B^4$. By assumption, there exists an isotopy such that~$B$ intersects $D_R$ transversely in a single point. Thus, the $1$- and the $2$-handle cancel and there exists a diffeomorphism 
$$f_B\colon B^4\cup h_1(R)\cup h_2(B)\rightarrow B^4.$$ 
We denote by $K_G$ the image of $G$ under this diffeomorphism, \ie $f_B(G)=K_G$. By interchanging the roles of $B$ and $G$, we also obtain a knot $K_B$.
By construction, the knots $K_B$ and $K_G$ share the same $0$-trace, as summarised in the next proposition.

\begin{prop}\label{thm:K_B_and_K_G}
Let $L=R\cup B\cup G$ be an RBG link. Then $K_B$ and $K_G$ have orientation-preservingly diffeomorphic $0$-traces.
\end{prop}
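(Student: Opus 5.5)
We follow the standard RBG argument (cf.\ Piccirillo and Manolescu--Piccirillo): the $4$-manifold $X_L$ is diffeomorphic both to $X_0(K_G)$ and to $X_0(K_B)$, after checking that framings work out.

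First, we cancel $R$ with $B$. By condition (a) we isotope $L$ so that $B$ meets the spanning disk $D_R$ of $R$ geometrically once. Then the dotted circle $R$ and the $2$-handle along $B$ form a cancelling pair, and $f_B$ identifies $B^4\cup h_1(R)\cup h_2(B)$ with $B^4$. The remaining $2$-handle is attached along $f_B(G)=K_G$, so $X_L\cong B^4\cup h_2(K_G)$ with some framing $n_G$.

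The main step is computing $n_G$, and showing $n_G=0$. In Kirby calculus the cancellation is performed by sliding $G$ over $B$ each time $G$ passes through $D_R$, until $G$ is disjoint from $D_R$; then $R\cup B$ is erased. By (b), $R\cup G$ is a Hopf link after isotopy. Instead of choosing intersection points, it is cleaner to use homology. Consider the intersection form of $X_L$, computed in the basis of $H_2(X_L)\cong\Z$ generated by a class $\Sigma$ built from $G$ with a suitable multiple of $B$ so that it has algebraic intersection $0$ with $D_R$. Let $\epsilon_B,\epsilon_G=\pm1$ be the algebraic intersection numbers of $B$ and $G$ with $D_R$, i.e.\ $\lk(R,B)$ and $\lk(R,G)$. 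Then the class $[G]-\epsilon_B\epsilon_G[B]$ generates $H_2$, and its self-intersection is
\[
g+b-2\epsilon_B\epsilon_G\lk(B,G).
\]
After choosing orientations of $B$ and $G$ so that $\epsilon_B\epsilon_G=1$, condition (c) makes this $0$. Since the intersection form of $B^4\cup_{n} h_2(K)$ is $(n)$, this gives $n_G=0$, hence $X_L\cong X_0(K_G)$. If the orientation convention is fixed by the given orientations instead, I would track it carefully. This sign bookkeeping is the part I expect to be fiddly, and the statement's condition (c) is presumably designed so that this works with the intended orientation convention.

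By symmetry, cancelling $R$ with $G$ instead gives $X_L\cong X_0(K_B)$. Composing the two diffeomorphisms yields $X_0(K_B)\cong X_0(K_G)$, and it is orientation-preserving because both cancellations are diffeomorphisms of oriented handlebodies induced from the fixed orientation of $X_L$.
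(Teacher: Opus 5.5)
Your argument is correct in outline and differs from the paper's in one substantive step. One orientation step needs fixing.

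\textbf{Comparison with the paper.} The skeleton is the same as in the paper: $X_L$ is simultaneously a trace on $K_G$ (cancel $R$ with $B$) and a trace on $K_B$ (cancel $R$ with $G$), and one must show both framings vanish. The difference is how the framing is found.
\begin{itemize}
\item The paper reads off $f_B(g)=f_G(b)=b+g-2\lk(B,G)$ from the explicit handle slides in its figure.
\item You determine it homologically. $H_2(X_L)\cong\Z$ is generated by $[G]-\epsilon_B\epsilon_G[B]$, and its self-intersection is computed from the linking matrix restricted to the kernel of linking with the dotted circle. (This is a standard fact, justified by tubing a Seifert surface along $R$; it deserves a citation.) Since $X_n(K)$ has intersection form $(n)$, any orientation-preserving identification $X_L\cong X_n(K)$ forces $n$ to equal that self-intersection.
\end{itemize}
Your route is cleaner. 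It gives $f_B(g)=f_G(b)$ for free, as both are the self-intersection of the same generator, and it needs no slide bookkeeping. The paper's slide picture buys something different: it also produces the explicit diagrams of $K_B$ and $K_G$ that are used afterwards to recognise the satellite patterns $Q_{K,c}$ and $P_{K,c}$.

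\textbf{The step to fix.} You write ``after choosing orientations of $B$ and $G$ so that $\epsilon_B\epsilon_G=1$, condition (c) makes this $0$.'' You have no such freedom, because condition (c) refers to the given orientations and is not reorientation-invariant. Reversing $B$ replaces $\lk(B,G)$ by $-\lk(B,G)$.

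Your quantity $b+g-2\epsilon_B\epsilon_G\lk(B,G)$ is reorientation-invariant, as it must be, but $b+g=2\lk(B,G)$ is not. If $\epsilon_B\epsilon_G=-1$ held for the given orientations, (c) would make the self-intersection $4\lk(B,G)$, and the proposition would fail.

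What you need is that $\epsilon_B\epsilon_G=1$ for the given orientations. This comes from (a) and (b) being isotopies of \emph{oriented} links, with meridians oriented so that $\lk(\mu_B,B)=\lk(\mu_G,G)=1$. Hence $\lk(R,B)=\lk(R,G)=1$; reversing $R$ would flip both, so the product is well defined. This is exactly where the orientation data in the definition of an RBG link enter. The paper's formula $b+g-2\lk(B,G)$ uses the same convention implicitly.

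With this replacing the ``choose orientations'' sentence, your proof is complete.
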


\begin{proof}
We write $f_B(g)$ for the image of the framing~$g$ of $G$ under the cancellation diffeomorphism $f_B$. Then $X_L$ is diffeomorphic to the $(f_B(g))$-trace of $K_G$. Similarly, the $(f_G(b))$-trace of $K_B$ is diffeomorphic to $X_L$ and thus to the $(f_B(g))$-trace of~$K_G$.
An explicit computation using handle slides in Kirby calculus, as shown in \Cref{fig:main_figure}, reveals that $f_B(g)=f_G(b)=b+g-2 \lk(B,G)=0$ as claimed.
\end{proof}

Note that \Cref{thm:K_B_and_K_G} does not state that $K_B$ and $K_G$ are necessarily distinct. In fact, if the initial RBG link is overly simple or symmetric, the two knots might be isotopic. However, for sufficiently complicated RBG links, there is no reason to expect them to be isotopic.

\begin{theorem}\label{thm:satellite_knots}
    Let $K$ be an unknotting number one knot with unknotting crossing~$c$. 
    Then there are 
    two patterns $P_{K,c}$ and $Q_{K,c}$ such that
    \begin{enumerate}[label=(\alph*)]
        \item $Q_{K,c}(U)=K$,
        \item for any two knots $J$ and $M$, the knots 
        $Q_{K,c}(M) \#J$ and $P_{K,c}(J) \# M$ have the same $0$-trace.
    \end{enumerate}
    In particular, setting $J=M=U$ to be the unknot, the knots $Q_{K,c}(U)=K$ and~$P_{K,c}(U)$ have the same $0$-trace. 
\end{theorem}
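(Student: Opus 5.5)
We build one RBG link $L=R\cup B\cup G$ from the unknotting crossing $c$ of $K$, with $K_G=Q_{K,c}(M)\#J$ and $K_B=P_{K,c}(J)\#M$, and then apply \Cref{thm:K_B_and_K_G}. The starting point is the standard description of $K$ from an unknotting crossing. Changing $c$ turns $K$ into the unknot, so $K$ is obtained from an unknot $\gamma$ by $\pm1$-surgery on a small unknot $\eta$ encircling the two strands at $c$, with $\lk(\eta,\gamma)=0$. Equivalently, there is a two-component link $\gamma\cup\eta$ in which both components are unknotted, $\lk=0$, and $\eta$ carries framing $\mp1$; blowing down $\eta$ produces $K$.

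\emph{Step 1: construct $L$ with $J=M=U$.} We follow Piccirillo's construction~\cite{piccirillo:shake-genus,piccirillo:Conway}. Take $G$ to be a copy of $\gamma$, so that $G$ becomes $K$ after the handle cancellation. Let $R$ be a meridian of $G$, and let $B$ be obtained from $\eta$ by banding it to a meridian of $R$. This is arranged so that $R\cup B$ is isotopic to $\mu_B\cup B$: since $\eta$ is unknotted, the band can be chosen so that $R$ bounds a disk hit once by $B$. We then choose $b$ so that the cancellation $f_B$ acts on $G$ as the $\mp1$ twist along $\eta$; this gives $K_G=K$. Finally we set $g=2\lk(B,G)-b$, as forced by condition (c) of the RBG definition. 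Concretely, the dotted circle $R$ together with $B$ realises the blow-down of $\eta$ up to a slide, which is exactly why $K_G=K$.

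\emph{Step 2: insert $J$ and $M$.} The $0$-trace is insensitive to local knotting that is carried along through the cancellation. We tie $M$ locally into $B$ inside a small ball disjoint from $R$ and $G$, and tie $J$ locally into $G$ inside a small ball disjoint from $R$ and $B$. The conditions $R\cup B\simeq \mu_B\cup B$ and $R\cup G\simeq\mu_G\cup G$ then fail. To repair this, we first isotope $R\cup B\cup G$ so that $R$ is a meridian of both $B$ and $G$, which is possible by the two isotopies above, and then tie the local knots near that meridian. Under $f_B$, which cancels $R$ against $B$, the knot $G$ acquires the image of $B$'s local knot $M$ as a satellite. We define $Q_{K,c}$ to be the pattern given by $G$ in the complement of a neighbourhood of the local knot of $B$, which yields $K_G=Q_{K,c}(M)\#J$. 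Symmetrically, $K_B=P_{K,c}(J)\#M$, where $P_{K,c}$ is read off from $B$ in the complement of $J$'s local knot. Setting $M=U$ gives $Q_{K,c}(U)=K$, which is statement (a). \Cref{thm:K_B_and_K_G} then gives the same $0$-trace, which is statement (b).

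\emph{Main obstacle.} The delicate point is that when a $1$-handle $R$ cancels with $B$, every strand of $G$ that passes through the disk $D_R$ gets slid over $B$. These strands therefore pick up a parallel copy of $B$, including its local knot $M$, so the result is a satellite of $M$ rather than a connected sum. We will make this precise by following the cancellation through Kirby moves, as in \Cref{fig:main_figure}, using a band slide of $G$ over $B$ along the strands crossing $D_R$. We also need to check that the framings stay consistent: the local knots must add no writhe, so we use $0$-framed local knot insertion, and $b+g=2\lk(B,G)$ is unchanged by it.
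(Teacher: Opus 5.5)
\textbf{There is a genuine gap in Step 1, which is where (a) has to come from.} You claim one can ``choose $b$ so that the cancellation $f_B$ acts on $G$ as the $\mp1$ twist along $\eta$'', i.e.\ that $R$ and $B$ ``realise the blow-down of $\eta$''. This mechanism does not exist. A blow-down changes the $4$-manifold (it removes a $\mathbb{C}P^2$ or $\overline{\mathbb{C}P}^2$ summand), whereas cancelling $(R,B)$ does not.

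Concretely, in your link the small disc $D_R$ is met once by $G$ and once by $B$ (via $\mu_R$). To cancel $(R,B)$ you slide $G$ once over $B$ to clear $D_R$, then erase $R$ and $B$. So $K_G$ is $\gamma$ band-summed with a single parallel copy of $B=\eta\#_\beta\mu_R$. Once $R$ is gone, the $\mu_R$-portion of that copy is a trivial arc. Hence $K_G$ is a fusion $\gamma\#_{\beta'}\eta$ of the link $\gamma\cup\eta$, along a band $\beta'$ determined by $\beta$ and the position of $R$.

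The framing $b$ plays no role here. A single parallel strand twisting around $B$ can be untwisted inside the tube around $B$ once $B$ is erased, so $b$ affects only the resulting framing, never the knot type. Whether $K_G=K$ therefore depends entirely on the band, which you never specify.

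The step can be repaired, but by a different argument:
\begin{enumerate}
\item place $R$ on one of the two strands at $c$ and take $\beta$ short;
\item check that the fusion is then a finger of that strand around the other one (the part of the lasso encircling its own strand retracts), i.e.\ a full twist on the two strands at $c$;
\item choose the side on which $\beta$ meets $\eta$ so that this full twist undoes the crossing change, rather than adding two further half-twists.
\end{enumerate}
None of this is in your proposal.

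\textbf{Even once repaired, your RBG link is degenerate.} Each of $B$ and $G$ runs over the $1$-handle exactly once. So each cancellation drags only one strand through the other component's local knot, and no genuine satellites arise. Both cancellations produce the same fusion along the short band at $D_R$, so $K_G$ and $K_B$ are both that fusion $\#\,M\#J$. Thus $Q_{K,c}=P_{K,c}$ is ``connected sum with $K$'', which is exactly the trivial solution mentioned in \Cref{rmk:picc}(d). Two side remarks follow: your ``main obstacle'' never occurs for this link, and, contrary to Step 2, tying local knots in small balls away from $D_R$ does not destroy the RBG conditions.

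This proves the literal statement, but it yields $P_{K,c}(U)=K$, which is useless for \Cref{cor:inf_order}. The paper's construction is different:
\begin{itemize}
\item The dotted circle $R$ is itself the crossing-changed unknot $\gamma$.
\item $B$ and $G$ are small circles at $c$, with $b=-2$, $g=0$ and $\lk(B,G)=-1$ for a positive crossing, carrying $M$ and $J$.
\item Part (b) is then \Cref{thm:K_B_and_K_G}.
\item $Q_{K,c}(U)=K$ is checked by replacing the dot with a $0$-framing, sliding $B$ over $R$ so that $G$ becomes a $0$-framed meridian of $R$ unlinked from $B$, and slam-dunking $(R,G)$.
\end{itemize}
Because the knotting sits in the $1$-handle, the dual pattern $P_{K,c}$ is genuinely nontrivial. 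For the Conway knot, $P_{C,c}(U)$ is Piccirillo's $K'$.
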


We refer to $P_{K,c}$ as the \emph{Piccirillo pattern} of $K$ at the unknotting crossing $c$.

\begin{remark}
\label{rmk:picc}
    \begin{enumerate}[label=(\alph*)]
        \item The knots from \Cref{thm:satellite_knots} will arise as $K_B$ and $K_G$ from a particular RBG link~$L = R \cup B \cup G$.
        In fact, only part (a) of \Cref{thm:satellite_knots} requires the unknotting number one hypothesis. Part (b) of the theorem holds for any RBG link. Namely, given an RBG link $L=R \cup B \cup G$, 
        there exist two associated patterns $P$ and $Q$ such that $Q(M)\# J$ and $P(J)\# M$ have the same $0$-trace for any two knots~$M$ and~$J$.
	    However, our main motivation here is to construct a pattern $P_K$ with property~\eqref{eq:trace},
	    which can then be used in \cref{cor:inf_order_pattern}.
	    Hence, we only present the construction in this special case.
        \item The RBG link that we construct in the proof of \Cref{thm:satellite_knots} is similar to a link that arises in Piccirillo's construction 
        in Proposition~2.2 of~\cite{piccirillo:Conway}; see also the middle frame of Figure~3 therein.
        Indeed, the $J=M=U$ case of \Cref{thm:satellite_knots} recovers Piccirillo's construction. In this case, we obtain two knots $K_B = Q_{K,c}(U)=K$ and~$K_G=P_{K,c}(U)$ which have the same $0$-trace. When~$K=C$, the Conway knot, the knot $K_G$ is the knot called~$K\pr$ in \cite{piccirillo:Conway} with a non-trivial $s$ invariant, which was used to demonstrate the non-sliceness of the Conway knot.
        \item The patterns~$Q_{K,c}$ and~$P_{K,c}$ correspond to the dualizable pattern $P$ and its dual pattern $P^\ast$ from \cite[Proposition~4.2]{piccirillo:shake-genus}. 
        \item Once again, in the statement of \Cref{thm:satellite_knots} we are not claiming that the patterns~$Q_{K,c}$ and $P_{K,c}$ or the knots $Q_{K,c}(M) \#J$ and $P_{K,c}(J) \# M$ are necessarily distinct. In fact, if we take for both~$Q_{K,c}$ and $P_{K,c}$ the pattern that performs a connected sum with $K$, then \Cref{thm:satellite_knots} is trivially true. However, the patterns that we construct in the proof of \Cref{thm:satellite_knots} below are generally more interesting. Even for $J=M=U$, the corresponding knots are usually non-isotopic; see for example \cite[Conjecture~5.5]{KS_exotic_traces} and the supporting data.
    \end{enumerate}
\end{remark}

    \begin{figure}[t]
	   \centering
	   \def\svgwidth{0.8\columnwidth}
	   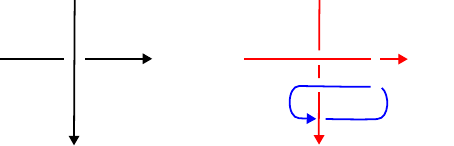
       \caption{Left: a positive unknotting crossing $c$ of $K$. Right: an RBG link $L = R \cup B \cup G$. Outside the shown tangle, the red knot $R$ coincides with~$K$. The boxes labelled $M$ and $J$ represent a connected sum of the blue and green unknot with the knot $M$ and~$J$, respectively.
       }
	   \label{fig:RBG}
    \end{figure}

\begin{proof}[{Proof of \Cref{thm:satellite_knots}}]
    Let $c$ be an unknotting crossing of $K$. Consider first the case that $c$ is a positive crossing, as shown on the left in \Cref{fig:RBG}. The $3$-component link $L = R \cup B \cup G$ on the right of~\Cref{fig:RBG} with~$b=-2$ and $g=0$ is an RBG link. Indeed, since the shown crossing is an unknotting crossing, the red knot~$R$ is an unknot, and thus a meridian of $B$ and $G$. 
    We have $\lk(B,G) = -1$, and hence~$b+g = 2\lk(B,G)$.
    If the unknotting crossing~$c$ is negative, the construction of the associated RBG link is similar. However, in this case, the vertical strands of both~$K$ and~$R$ in~\Cref{fig:RBG} are oriented upwards, and the blue component~$B$ of~$L$, which in this case has framing $b=2$, has the opposite orientation. Otherwise, the RBG link is the same. Consequently, in the case of a negative crossing, 
    we obtain~$\lk(B,G)=1$, and therefore $b+g = 2\lk(B,G)$ again.

        \begin{figure}[t]
    \centering
    \tiny   
    \def\svgwidth{1\columnwidth}  
    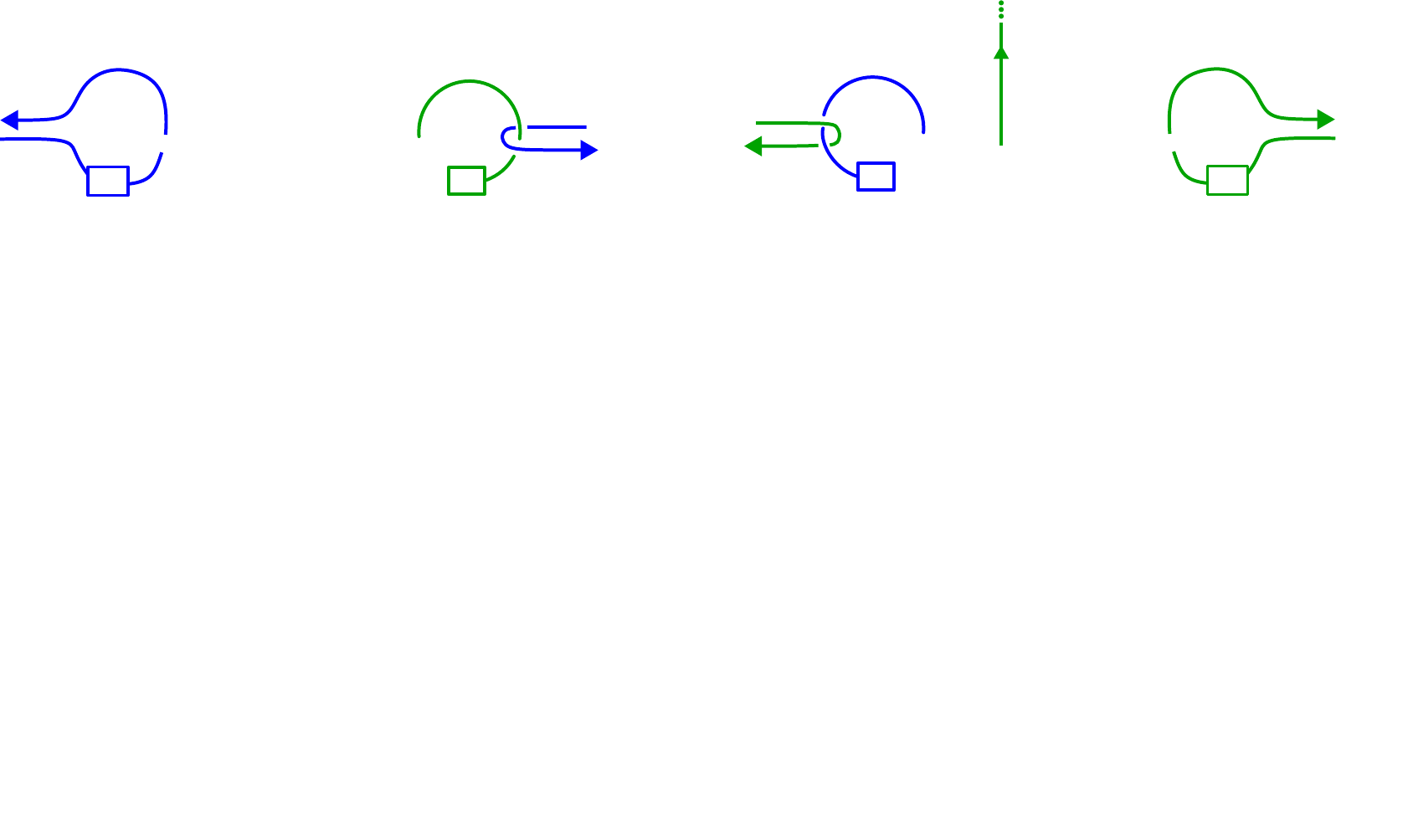
    \caption{Top row: two views of the same RBG link $L$. Bottom row: the knots $K_B$ and $K_G$ have the same $0$-trace. For the knot~$K_B$, the box labelled $M$ represents both the diagram of the knot~$M$ and the ($-w(M)$)-full twists produced by the handle-sliding operation, where $-w(M)$ denotes the writhe of the diagram of $M$; while the box labelled $J$ indicates the connected sum with $J$. For~$K_G$, the roles of~$J$ and $M$ are exchanged. The box labelled $-2$ represents two left-handed full twists between the indicated strands.}
    \label{fig:main_figure}
\end{figure}

    Since $R$ is a meridian of both $G$ and $B$, the diagram of $L$ can be isotoped into the top-left or top-right configurations shown in Figure~\ref{fig:main_figure}. Here, we depict
    again only the case where the unknotting crossing $c$ is positive; the case where the crossing is negative works analogously.
    Performing the handle slides indicated by the grey arrows in Figure~\ref{fig:main_figure} and cancelling the $1$- and $2$-handle pairs~$(R, G)$ and $(R, B)$, respectively, yields the knots~$K_B$ and~$K_G$, as shown in the bottom row of Figure~\ref{fig:main_figure}. By \Cref{thm:K_B_and_K_G} (or directly by the Kirby moves just described), the knots~$K_B$ and~$K_G$ have the same $0$-trace.

    As indicated in \Cref{fig:main_figure}, the knot $K_B$ can be viewed as the result of applying the satellite operation with pattern
    $Q_{K,c}$ and companion $M$, and then taking the connected sum with the knot $J$. An analogous description holds for $K_G$, using the pattern $P_{K,c}$. In particular, we have
    \begin{equation*}
        K_B = Q_{K,c}(M) \#J \quad \text{and} \quad K_G = P_{K,c}(J) \# M.
    \end{equation*}
    Here, the patterns $Q_{K,c}$ and $P_{K,c}$ are the blue and green knots shown in the bottom row of \Cref{fig:main_figure}, setting $J = M =U$ and viewing them in the solid tori determined by the meridians $\mu_{Q_{K,c}}$ and $\mu_{P_{K,c}}$ (in pink), respectively.    

Finally, we show that $Q_{K,c}(U)=K$.
To do so,
we set $J=M=U$. By construction, this implies that $Q_{K,c}(U)$ is the image of the blue component $B$ after cancelling the $1$- and $2$-handle pair $(R,G)$.
To identify this knot, we consider instead the Kirby diagram in \Cref{fig:RBG} with $J=M=U$, and surger the red dotted circle~$R$, replacing the dot by a $0$-framing, as shown in the left-hand frame of \Cref{fig:isotopy}. Note that this changes the $4$-manifold, but not its boundary $3$-manifold. 
In the resulting diagram, slide the blue component $B$ over the $0$-framed red component $R$ along the grey arrow in \Cref{fig:isotopy}, ensuring that the green component $G$ becomes a $0$-framed meridian of $R$ that does not link the new blue component.
The pair $(R,G)$ can then be removed using a slam-dunk move (see, for example, \cite[p.~163]{gompf-stipsicz:book}), after which
the image of the blue component is precisely the knot $K$ with framing $0$, as shown in the right-hand frame of \Cref{fig:isotopy}. 

This identifies $K$ with the image of $B$ under the diffeomorphism
$$
f\colon\partial\bigl(B^4 \cup h_2(R)\cup h_2(G)\bigr)\to S^3,
$$
induced by surgery on the red $1$-handle, a $2$-handle slide,
and the slam-dunk move.

On the other hand, the knot $K_B=Q_{K,c}(U)$ was defined as the image of $B$ under the cancellation diffeomorphism
$$
f_G \colon B^4\cup h_1(R)\cup h_2(G)\to B^4.
$$
Therefore,
the composition $f_G|_{\partial}\circ f^{-1}$ is a diffeomorphism of $S^3$ that maps $K$ to~$K_B=Q_{K,c}(U)$. Since equivalent knots in $S^3$ are isotopic, it follows that $Q_{K,c}(U)=K$, as claimed.

Note that, alternatively, we could stick with the red component $R$ as dotted $1$-handle in \Cref{fig:RBG} and use Kirby calculus to deduce that $Q_{K,c}(U)=K$,  
similar to the argument used in \cite[Proposition~2.2]{piccirillo:Conway}.
\end{proof}

 \begin{figure}[t]
    \centering
    \def\svgwidth{1.\columnwidth}    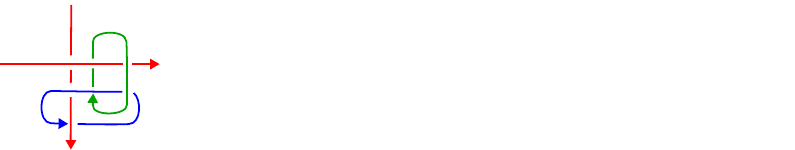
    \caption{From left to right: the surgery diagram obtained by replacing the red dotted circle $R$ with a $0$-framed $2$-handle, the result of sliding the $(-2)$-framed component $B$ over the $0$-framed component $R$, and the result of a slam-dunk move on $(R,G)$. 
    }
    \label{fig:isotopy}
\end{figure}

We are now ready to prove \Cref{cor:inf_order} from the introduction. 

\begin{proof}[{Proof of \Cref{cor:inf_order}}]
    Let $K$ be a knot with unknotting number $1$ and an unknotting crossing $c$. By \Cref{thm:satellite_knots} with $M=U$, the knots $K \# J$ and $P_{K,c}(J)$ have the same $0$-trace for every knot~$J$. Note that the sign $\operatorname{sgn}(c)$ of the crossing~$c$ determines whether $[K]$ belongs to $\CC_+$ or $\CC_-$. If $s(P_{K,c}(U))\neq 0$, then \Cref{cor:inf_order_pattern} implies that~$K \# J$ is non-slice for all knots $J$ with $[J]\in \CC_{\operatorname{sgn}(c)}$. Here we use the fact that $s$ is monotone~\cite[Theorem~1.11]{manolescu-marengon-sarkar-willis}.
\end{proof}

Note that in the statement of \Cref{cor:inf_order}, we could replace the Rasmussen $s$ invariant with any monotone knot concordance invariant $y$.

Finally, we apply our method to the Conway knot.

\begin{proof}[{Proof of \Cref{thm:Conway_inf_order}}]
    From the unknotting crossing $c$ of the Conway knot $C$ shown in Figure~\ref{fig:conway_crossing}, we obtain the pattern $P_{C,c}$ from \Cref{thm:satellite_knots}. By \Cref{rmk:picc}(b), 
    the satellite knot~$P_{C,c}(U)$ is isotopic to the knot called  $K'$ in \cite{piccirillo:Conway}, which was computed to have non-vanishing $s$ invariant in \cite{piccirillo:Conway}. Thus, the theorem follows from \Cref{cor:inf_order}.
\end{proof}

\begin{figure}[h]
     \def\svgwidth{0.3\columnwidth}
     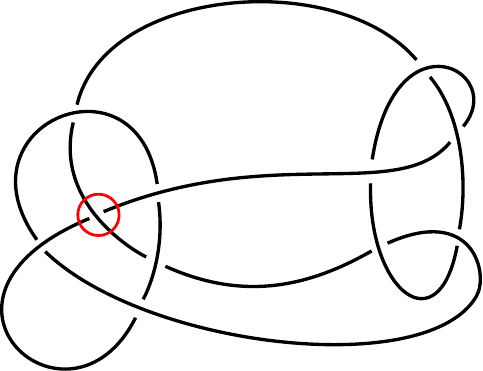
     \caption{An unknotting crossing $c$ of the Conway knot $C$. 
     }
     \label{fig:conway_crossing}
\end{figure}

\section{Experimental calculations}\label{sec:experiments}

In this section, we investigate the application of \cref{thm:main_detailed} to prime knots with a low crossing number as a means of establishing infinite concordance order. More details can be found in~\cite{data}.

On the one hand, if $[K]\in\CC_{\pm}$ and $y(K) \neq y(U)$ for some monotone knot concordance invariant $y$, then \cref{thm:main_detailed} may be applied with $P$ the trivial pattern---but of course, in that case the infinite concordance order of $K$ also follows directly from $y(K) \neq y(U)$. On the other hand, if $[K] \in \CC_+ \cap \CC_-$ (such knots were named \emph{BPH-slice} by Manolescu--Piccirillo~\cite{manolescu-piccirillo:RBG}), then $y(P(K)) = y(P(U))$ for all patterns $P$, and so there is no hope of applying \cref{thm:main_detailed} (this was also observed in~\cite[Theorem~A]{zbMATH06696649}). We are thus interested in knots $K$ with the following properties: 
\begin{enumerate}[label=(\alph*)]
\item There is no known monotone concordance invariant $y$ with $y(K) \neq 0$.
\item The concordance order of $K$ is infinite, or unknown.
\item $[K] \not\in \CC_+ \cap \CC_-$.
\item $[K] \in \CC_+\cup \CC_-$.
\end{enumerate}
Among the 2977 prime knots with crossing number 12 or less, there are 480 knots satisfying (a) and (b)~\cite{knotinfo}. For condition~(a), we checked that the Rasmussen~$s$ invariant (which equals $-2\tau$ for knots with crossing number 12 or less) and the Levine--Tristram signatures of $K$ vanish. There are no known algorithms for deciding whether a given concordance class lies in $\CC_{\pm}$. So we rely on computer experiments, continuing the efforts of Manolescu--Piccirillo~\cite{manolescu-piccirillo:RBG}. Among the aforementioned~480 knots, we find that 337 are in $\CC_+ \cap \CC_-$, and for the 143 remaining knots, we do not know. Among those $143$ knots, we find that 138 are in $\CC_+ \cup \CC_-$, and for the following 5 knots, we do not know if they are in $\CC_+ \cup \CC_-$:
\[
K12a899,\, K12a917,\, K12a1180,\, K12a1222,\, K12n880.
\]
So those 138 knots (see \cref{tab:candidates}) are the candidates for which \cref{thm:main_detailed} may establish infinite concordance order.

\begin{table}[htbp]
\begin{tabular}{>{\raggedleft}p{2cm}|r|>{\raggedleft\arraybackslash}p{2cm}}
Cr.~number / Alternating
         & (a), (b)
               & (a), (b), (d); (c)~possible \\\hline
$\leq 9$ &   9 &  0    \\
10a      &  19 &  6    \\
10n      &   4 &  0    \\
11a      &  65 & 19    \\ 
11n      &  24 &  4    \\ 
12a      & 221 & 75    \\
12n      & 138 & 34    \\
\end{tabular}
\bigskip

\caption{The 138 candidate knots, broken down by crossing number and alternatingness.}
\label{tab:candidates}
\end{table}

The only 12-crossing knot with unknown concordance order, K12n846, is unfortunately BPH-slice, so it does not satisfy~(c).
Among the 138 candidates, there are the following 14 knots with known unknotting number 1:
\begin{multline*}
K10a118, \, K11n34,\, K11n153,\, K12a261,\, K12a303,\, K12a635,\, K12a865,\\
K12a1048,\, K12a1235,\, K12a1239,\, K12a1250,\, K12n31,\, K12n300,\, K12n816.
\end{multline*}
For these knots, we try to apply \cref{thm:main_detailed} with a Piccirillo pattern. This works for the Conway knot $K11n34$ as discussed in the previous section, and moreover for the knot $K = K12n31$, for which we compute the $\mathbb{F}_3$-Rasmussen invariant to be~${s_3(P_{K,c}(U)) = 2}$. By \cref{cor:inf_order}, this recovers the fact that $K$ has infinite concordance order, which could previously be shown using twisted Alexander polynomials~\cite{collins_thesis}. 

Unfortunately, for the remaining 12 candidates $K$ with $u(K) = 1$, the Rasmussen invariants of $P_{K,c}(U)$ appear to be beyond the scope of computer calculations due to the high crossing number of the diagrams we found for $P_{K,c}(U)$.

If we drop the condition $u(K) = 1$, we can also try applying \cref{thm:main_detailed} using patterns other than Piccirillo patterns. Our calculations are ongoing, but have so far not yet met with success. One reason for this is that patterns $P$ without property~\eqref{eq:trace} have the disadvantage, from a practical viewpoint, that $y(P(K))$ needs to be calculated. If $K$ has crossing number $c$ and $P$ has wrapping number $w$ (also known as \emph{geometric winding number}), the diagram one naively finds for $P(K)$ will have a crossing number of at least $cw^2$. Since our candidate knots have $c\geq 10$, this means that calculations of Rasmussen invariants appear to be feasible only for~$w = 2$.

\begin{remark}\label{rmk:s}
Among the simplest choices for $P$ are the $(2,\pm 1)$-cable patterns, denoted~$P_{\pm}$. We remind the reader of \cref{rmk:tau}: if ${\tau(P_+(K)) = \tau(P_-(K)) = 0}$, then $\tau(P(K)) = \tau(P(U))$ for all patterns~$P$. It was conjectured in~\cite[Conjecture~2.31]{zbMATH07939044} that this should also hold for the Rasmussen invariants $s_p$ over a field of any characteristic~$p$. However, the Conway knot $C$ provides a counterexample: one can verify that $s_p(P_+(C)) = s_p(P_-(C)) = 0$, but $0 = s_p(P(C)) \neq s_p(P(U))$ for the pattern $P = -P_{C,c}$ and $p = 3,5,7$
(though $s_2(P(U))=0$,  
so the conjecture remains open for $p = 2$).
Here, the pattern $P = -P_{C,c}$ has wrapping number~$5$.

Nevertheless, a weaker statement holds for $s_p$: if $s_p(P_+(K)) = s_p(P_-(K)) = 0$, then $s_p(P(K)) = s_p(P(U))$ for $P$ a pattern with wrapping number~$2$ and winding number~$\pm 2$, or for $P$ a twisted Whitehead pattern (and for all patterns $P$ of wrapping number~$2$ if~$p = 2$), as proven in~\cite{zbMATH07939044} (see also~\cite{zbMATH08091067}).
\end{remark}

\bibliographystyle{myamsalpha}
\bibliography{main}

@article{manolescu-marengon-sarkar-willis,
 author = {Manolescu, Ciprian and Marengon, Marco and Sarkar, Sucharit and Willis, Michael},
 title = {A generalization of {Rasmussen}'s invariant, with applications to surfaces in some four-manifolds},
 fjournal = {Duke Mathematical Journal},
 journal = {Duke Math. J.},
 issn = {0012-7094},
 volume = {172},
 number = {2},
 pages = {231--311},
 year = {2023},
 language = {English},
 doi = {10.1215/00127094-2022-0039},
 url = {real.mtak.hu/163239/1/1910.08195v2.pdf},
 zbMATH = {7653256},
 Zbl = {1535.57014}
}

@misc{data,
      title={Data accompanying this paper},
      author={Donatone, Chiara and Kegel, Marc and Lewark, Lukas and Truöl, Paula},
      year={2026},
      howpublished = {available as ancillary files of this article on the arXiv and on \url{https://github.com/LLewark/conway-knot-has-infinite-concordance-order}}
}

@misc{RBG_high_dim,
      title={On the detection of knotted spheres by their traces in high dimensions}, 
      author={Valentina Bais and Alessio {Di Prisa} and Daniel Hartman and Chun-Sheng Hsueh and Marc Kegel and Alice Merz and Mark Pencovitch and Arunima Ray and Diego Santoro and Paula Truöl and Laura Wakelin},
      year={2025},
      arxiv={2511.07251}, 
      howpublished = {Preprint},
}

@misc{HPW,
      title={Dehn surgery functions are never injective}, 
      author={Kyle Hayden and Lisa Piccirillo and Laura Wakelin},
      year={2025},
      arxiv={2508.13369}, 
      howpublished = {Preprint},
}

@article{Akbulut,
 author = {Akbulut, Selman},
 title = {A fake compact contractible 4-manifold},
 fjournal = {Journal of Differential Geometry},
 journal = {J. Differ. Geom.},
 issn = {0022-040X},
 volume = {33},
 number = {2},
 pages = {335--356},
 year = {1991},
 language = {English},
 doi = {10.4310/jdg/1214446320},
 zbMATH = {11236},
 Zbl = {0839.57015}
}

@article{BM_non_char,
 author = {Baker, Kenneth L. and Motegi, Kimihiko},
 title = {Noncharacterizing slopes for hyperbolic knots},
 fjournal = {Algebraic \& Geometric Topology},
 journal = {Algebr. Geom. Topol.},
 issn = {1472-2747},
 volume = {18},
 number = {3},
 pages = {1461--1480},
 year = {2018},
 language = {English},
 doi = {10.2140/agt.2018.18.1461},
 zbMATH = {6866404},
 Zbl = {1422.57010}
}

@misc{KP_four_surgeries,
      title={Knots that share four surgeries}, 
      author={Marc Kegel and Lisa Piccirillo},
      year={2025},
      arxiv={2505.13168}, 
      howpublished = {Preprint},
}

@misc{KS_exotic_traces,
      title={The search for exotic knot traces}, 
      author={Marc Kegel and Jonathan Spreer},
      year={2026},
      arxiv={2603.22438}, 
      howpublished = {Preprint},
}

@article{zbMATH06272212,
 author = {Kronheimer, P. B. and Mrowka, T. S.},
 title = {Gauge theory and {Rasmussen}'s invariant},
 fjournal = {Journal of Topology},
 journal = {J. Topol.},
 issn = {1753-8416},
 volume = {6},
 number = {3},
 pages = {659--674},
 year = {2013},
 language = {English},
 doi = {10.1112/jtopol/jtt008},
 zbMATH = {6272212},
 Zbl = {1298.57025}
}

@article{zbMATH02057402,
 author = {Ozsv{\'a}th, Peter and Szab{\'o}, Zolt{\'a}n},
 title = {Knot {Floer} homology and the four-ball genus},
 fjournal = {Geometry \& Topology},
 journal = {Geom. Topol.},
 issn = {1465-3060},
 volume = {7},
 pages = {615--639},
 year = {2003},
 language = {English},
 doi = {10.2140/gt.2003.7.615},
 url = {https://eudml.org/doc/123536},
 zbMATH = {2057402},
 Zbl = {1037.57027}
}

@article{zbMATH04096384,
 author = {Cochran, Tim D. and Gompf, Robert E.},
 title = {Applications of {Donaldson}'s theorems to classical knot concordance, homology 3-spheres and property {P}},
 fjournal = {Topology},
 journal = {Topology},
 issn = {0040-9383},
 volume = {27},
 number = {4},
 pages = {495--512},
 year = {1988},
 language = {English},
 doi = {10.1016/0040-9383(88)90028-6},
 zbMATH = {4096384},
 Zbl = {0669.57003}
}

@article{zbMATH06198082,
 author = {Cochran, Tim D. and Harvey, Shelly and Horn, Peter},
 title = {Filtering smooth concordance classes of topologically slice knots},
 fjournal = {Geometry \& Topology},
 journal = {Geom. Topol.},
 issn = {1465-3060},
 volume = {17},
 number = {4},
 pages = {2103--2162},
 year = {2013},
 language = {English},
 doi = {10.2140/gt.2013.17.2103},
 zbMATH = {6198082},
 Zbl = {1282.57006}
}

@article{piccirillo:Conway,
author = {Piccirillo, Lisa},
title = {{The Conway knot is not slice}},
volume = {191},
 journal = {Ann. Math. (2)},
number = {2},
publisher = {Department of Mathematics of Princeton University},
pages = {581--591},
year = {2020},
doi = {10.4007/annals.2020.191.2.5},
URL = {https://doi.org/10.4007/annals.2020.191.2.5},
zbl = {1471.57011},
mrnumber = {4076631},
}

@article{rasmussen,
 author = {Rasmussen, Jacob},
 title = {Khovanov homology and the slice genus},
 fjournal = {Inventiones Mathematicae},
 journal = {Invent. Math.},
 issn = {0020-9910},
 volume = {182},
 number = {2},
 pages = {419--447},
 year = {2010},
 language = {English},
 doi = {10.1007/s00222-010-0275-6},
 zbMATH = {5818344},
 Zbl = {1211.57009}
}

@inproceedings{litherland,
	author = {Litherland, R. A.},
	booktitle = {Topology of low-dimensional manifolds ({P}roc. {S}econd {S}ussex {C}onf., {C}helwood {G}ate, 1977)},
	mrclass = {57M25},
	mrnumber = {547456},
	mrreviewer = {Lee Rudolph},
	pages = {71--84},
	publisher = {Springer, Berlin},
	series = {Lecture Notes in Math.},
	title = {Signatures of iterated torus knots},
	volume = {722},
	year = {1979},
 Zbl = {0412.57002}
}

@article{Levine1969,
	author = {Levine, J.},
	journal = {Comment. Math. Helvetici},
	pages = {229--244},
	title = {Knot Cobordism Groups in Codimension Two},
	url = {http://eudml.org/doc/139394},
	volume = {44},
	year = {1969},
 Zbl = {0176.22101},
}

@article{tristram_1969,
	author = {Tristram, A. G.},
	doi = {10.1017/S0305004100044947},
	journal = {Math. Proc. Cambridge Philos. Soc.},
	number = {2},
	pages = {251--264},
	publisher = {Cambridge University Press},
	title = {Some cobordism invariants for links},
	volume = {66},
	year = {1969},
 Zbl = {0191.54703}
}

@article{kirby-melvin,
 author = {Kirby, Robion and Melvin, Paul},
 title = {Slice knots and property {R}},
 fjournal = {Inventiones Mathematicae},
 journal = {Invent. Math.},
 issn = {0020-9910},
 volume = {45},
 pages = {57--59},
 year = {1978},
 language = {English},
 doi = {10.1007/BF01406223},
 url = {https://eudml.org/doc/142536},
 zbMATH = {3587731},
 Zbl = {0377.55002}
}

@article{miller-piccirillo,
 author = {Miller, Allison N. and Piccirillo, Lisa},
 title = {Knot traces and concordance},
 fjournal = {Journal of Topology},
 journal = {J. Topol.},
 issn = {1753-8416},
 volume = {11},
 number = {1},
 pages = {201--220},
 year = {2018},
 language = {English},
 doi = {10.1112/topo.12054},
 zbMATH = {6864029},
 Zbl = {1393.57010}
}

@article{manolescu-piccirillo:RBG,
 author = {Manolescu, Ciprian and Piccirillo, Lisa},
 title = {From zero surgeries to candidates for exotic definite 4-manifolds},
 fjournal = {Journal of the London Mathematical Society. Second Series},
 journal = {J. Lond. Math. Soc., II. Ser.},
 issn = {0024-6107},
 volume = {108},
 number = {5},
 pages = {2001--2036},
 year = {2023},
 language = {English},
 doi = {10.1112/jlms.12800},
 zbMATH = {7780667},
 Zbl = {1541.57006}
}

@article{zbMATH06696649,
 author = {Cha, Jae Choon and Kim, Min Hoon},
 title = {Rasmussen {{\(s\)}}-invariants of satellites do not detect slice knots},
 fjournal = {Journal of Knot Theory and its Ramifications},
 journal = {J. Knot Theory Ramifications},
 issn = {0218-2165},
 volume = {26},
 number = {2},
 pages = {7 p., Id/No 1740001},
 year = {2017},
 language = {English},
 doi = {10.1142/S0218216517400016},
 zbMATH = {6696649},
 Zbl = {1361.57004}
}

@misc{knotinfo,
Author = {Livingston, Charles and Moore, Allison H.},
howpublished = {\url{https://knotinfo.org}},
Title = {KnotInfo: Table of Knot Invariants},
Year = {2026},
month = {retrieved May},
}

@article{zbMATH06339370,
 author = {Cochran, Tim D. and Tweedy, Eamonn},
 title = {Positive links},
 fjournal = {Algebraic \& Geometric Topology},
 journal = {Algebr. Geom. Topol.},
 issn = {1472-2747},
 volume = {14},
 number = {4},
 pages = {2259--2298},
 year = {2014},
 language = {English},
 doi = {10.2140/agt.2014.14.379},
 zbMATH = {6339370},
 Zbl = {1311.57008}
}

@article{zbMATH07939044,
 author = {Lewark, Lukas and Zibrowius, Claudius},
 title = {Rasmussen invariants of {Whitehead} doubles and other satellites},
 fjournal = {Journal f{\"u}r die Reine und Angewandte Mathematik},
 journal = {J. Reine Angew. Math.},
 issn = {0075-4102},
 volume = {816},
 pages = {241--296},
 year = {2024},
 language = {English},
 doi = {10.1515/crelle-2024-0061},
 zbMATH = {7939044},
 Zbl = {1565.57011}
}

@book{zbMATH00877622,
 author = {Rolfsen, Dale},
 title = {Knots and links.},
 edition = {2nd print. with corr.},
 fseries = {Mathematics Lecture Series},
 series = {Math. Lect. Ser.},
 volume = {7},
 isbn = {0-914098-16-0},
 year = {1990},
 publisher = {Houston, TX: Publish or Perish},
 language = {English},
 zbMATH = {877622},
 Zbl = {0854.57002}
}

@misc{ren2025adjunctioninequalityspatiallyrefined,
      title={Adjunction inequality for spatially refined $s$-invariants}, 
      author={Ren, Qiuyu },
      year={2025},
      eprint={2502.20728},
      archivePrefix={arXiv},
      primaryClass={math.GT},
      arxiv={2502.20728}, 
      howpublished = {Preprint},
}

@article{ren:lee_torus_links,
 author = {Ren, Qiuyu},
 title = {Lee filtration structure of torus links},
 fjournal = {Geometry \& Topology},
 journal = {Geom. Topol.},
 issn = {1465-3060},
 volume = {28},
 number = {8},
 pages = {3935--3960},
 year = {2024},
 language = {English},
 doi = {10.2140/gt.2024.28.3935},
 zbMATH = {7960667},
 Zbl = {1560.57016}
}

@article {piccirillo:shake-genus,
    AUTHOR = {Piccirillo, Lisa},
     TITLE = {Shake genus and slice genus},
   JOURNAL = {Geom. Topol.},
  FJOURNAL = {Geometry \& Topology},
    VOLUME = {23},
      YEAR = {2019},
    NUMBER = {5},
     PAGES = {2665--2684},
      ISSN = {1465-3060,1364-0380},
   MRCLASS = {57K10 (57R65)},
  MRNUMBER = {4019900},
MRREVIEWER = {Laurence\ R.\ Taylor},
       DOI = {10.2140/gt.2019.23.2665},
       URL = {https://doi.org/10.2140/gt.2019.23.2665},
 Zbl = {1464.57005},
}

@article{SATO2018113,
 author = {Sato, Kouki},
 title = {A full-twist inequality for the {{\(\nu^{+}\)}}-invariant},
 fjournal = {Topology and its Applications},
 journal = {Topology Appl.},
 issn = {0166-8641},
 volume = {245},
 pages = {113--130},
 year = {2018},
 language = {English},
 doi = {10.1016/j.topol.2018.06.010},
 zbMATH = {6910247},
 Zbl = {1406.57008}
}

@article{hom_wu,
 author = {Hom, Jennifer and Wu, Zhongtao},
 title = {Four-ball genus bounds and a refinement of the {Ozsv{\'a}th}-{Szab{\'o}} tau invariant},
 fjournal = {The Journal of Symplectic Geometry},
 journal = {J. Symplectic Geom.},
 issn = {1527-5256},
 volume = {14},
 number = {1},
 pages = {305--323},
 year = {2016},
 language = {English},
 doi = {10.4310/JSG.2016.v14.n1.a12},
 zbMATH = {6623437},
 Zbl = {1348.57023}
}

@article {Tagami,
 author = {Tagami, Keiji},
 title = {On annulus presentations, dualizable patterns and {RGB}-diagrams},
 fjournal = {Journal of Knot Theory and its Ramifications},
 journal = {J. Knot Theory Ramifications},
 issn = {0218-2165},
 volume = {33},
 number = {9},
 pages = {23 p., Id/No 2497001},
 year = {2024},
 doi = {10.1142/S0218216524970010},
 zbMATH = {7935791},
 Zbl = {1553.57008}
}

@article{qin,
 author = {Qin, Qianhe},
 title = {An {RBG} construction of integral surgery homeomorphisms},
 fjournal = {Algebraic \& Geometric Topology},
 journal = {Algebr. Geom. Topol.},
 issn = {1472-2747},
 volume = {25},
 number = {6},
 pages = {3755--3774},
 year = {2025},
 language = {English},
 doi = {10.2140/agt.2025.25.3755},
 zbMATH = {8125803},
 Zbl = {1577.57007},
}

@book{gompf-stipsicz:book,
	  author = {Gompf, R. E. and Stipsicz, A. I. },
 title = {4-manifolds and {Kirby} calculus},
 fseries = {Graduate Studies in Mathematics},
 series = {Grad. Stud. Math.},
 issn = {1065-7339},
 volume = {20},
 isbn = {0-8218-0994-6},
 year = {1999},
 publisher = {Providence, RI: American Mathematical Society},
 language = {English},
 zbMATH = {1356915},
 Zbl = {0933.57020}
}

@phdthesis{collins_thesis,
      title={On the concordance orders of knots}, 
      author={Julia Collins},
      year={2011},
	school={University of Edinburgh},
      arxiv={1206.0669}, 
}

@article{zbMATH07408046,
 author = {Hayden, Kyle and Mark, Thomas E. and Piccirillo, Lisa},
 title = {Exotic {Mazur} manifolds and knot trace invariants},
 fjournal = {Advances in Mathematics},
 journal = {Adv. Math.},
 issn = {0001-8708},
 volume = {391},
 pages = {30 p., Id/No 107994},
 year = {2021},
 language = {English},
 doi = {10.1016/j.aim.2021.107994},
 zbMATH = {7408046},
 Zbl = {1482.57019}
}

@article{os:rational_surgeries,
 author = {Ozsv{\'a}th, Peter S. and Szab{\'o}, Zolt{\'a}n},
 title = {Knot {Floer} homology and rational surgeries},
 fjournal = {Algebraic \& Geometric Topology},
 journal = {Algebr. Geom. Topol.},
 issn = {1472-2747},
 volume = {11},
 number = {1},
 pages = {1--68},
 year = {2011},
 language = {English},
 doi = {10.2140/agt.2011.11.1},
 zbMATH = {5837746},
 Zbl = {1226.57044}
}

@article{os:conc_homos,
 author = {Ozsv{\'a}th, Peter S. and Stipsicz, Andr{\'a}s I. and Szab{\'o}, Zolt{\'a}n},
 title = {Concordance homomorphisms from knot {Floer} homology},
 fjournal = {Advances in Mathematics},
 journal = {Adv. Math.},
 issn = {0001-8708},
 volume = {315},
 pages = {366--426},
 year = {2017},
 language = {English},
 doi = {10.1016/j.aim.2017.05.017},
 zbMATH = {6741267},
 Zbl = {1383.57020}
}

@article{os:d_inv,
 author = {Ozsv{\'a}th, Peter and Szab{\'o}, Zolt{\'a}n},
 title = {Absolutely graded {Floer} homologies and intersection forms for four-manifolds with boundary},
 fjournal = {Advances in Mathematics},
 journal = {Adv. Math.},
 issn = {0001-8708},
 volume = {173},
 number = {2},
 pages = {179--261},
 year = {2003},
 language = {English},
 doi = {10.1016/S0001-8708(02)00030-0},
 zbMATH = {1891234},
 Zbl = {1025.57016}
}

@article{zbMATH05614872,
 author = {Lobb, Andrew},
 title = {A slice genus lower bound from {{\(sl(n)\)}} {Khovanov}-{Rozansky} homology},
 fjournal = {Advances in Mathematics},
 journal = {Adv. Math.},
 issn = {0001-8708},
 volume = {222},
 number = {4},
 pages = {1220--1276},
 year = {2009},
 language = {English},
 doi = {10.1016/j.aim.2009.06.001},
 zbMATH = {5614872},
 Zbl = {1200.57011}
}

@article{zbMATH05550658,
 author = {Wu, Hao},
 title = {On the quantum filtration of the {Khovanov}-{Rozansky} cohomology},
 fjournal = {Advances in Mathematics},
 journal = {Adv. Math.},
 issn = {0001-8708},
 volume = {221},
 number = {1},
 pages = {54--139},
 year = {2009},
 language = {English},
 doi = {10.1016/j.aim.2008.12.003},
 zbMATH = {5550658},
 Zbl = {1167.57007}
}

@article{zbMATH06366498,
 author = {Hom, Jennifer},
 title = {Bordered {Heegaard} {Floer} homology and the tau-invariant of cable knots},
 fjournal = {Journal of Topology},
 journal = {J. Topol.},
 issn = {1753-8416},
 volume = {7},
 number = {2},
 pages = {287--326},
 year = {2014},
 language = {English},
 doi = {10.1112/jtopol/jtt030},
 zbMATH = {6366498},
 Zbl = {1368.57002}
}

@article{zbMATH06700148,
 author = {Levine, Adam Simon},
 title = {Nonsurjective satellite operators and piecewise-linear concordance},
 fjournal = {Forum of Mathematics, Sigma},
 journal = {Forum Math. Sigma},
 issn = {2050-5094},
 volume = {4},
 pages = {47 p., Id/No e34},
 year = {2016},
 language = {English},
 doi = {10.1017/fms.2016.31},
 zbMATH = {6700148},
 Zbl = {1369.57015}
}

@article{rasmussen:hi,
 author = {Rasmussen, Jacob},
 title = {Lens space surgeries and a conjecture of {Goda} and {Teragaito}},
 fjournal = {Geometry \& Topology},
 journal = {Geom. Topol.},
 issn = {1465-3060},
 volume = {8},
 pages = {1013--1031},
 year = {2004},
 language = {English},
 doi = {10.2140/gt.2004.8.1013},
 url = {https://eudml.org/doc/124193},
 zbMATH = {2105210},
 Zbl = {1055.57010}
}

@article{ni-wu,
 author = {Ni, Yi and Wu, Zhongtao},
 title = {Cosmetic surgeries on knots in {{\(S^3\)}}},
 fjournal = {Journal f{\"u}r die Reine und Angewandte Mathematik},
 journal = {J. Reine Angew. Math.},
 issn = {0075-4102},
 volume = {706},
 pages = {1--17},
 year = {2015},
 language = {English},
 doi = {10.1515/crelle-2013-0067},
 url = {resolver.caltech.edu/CaltechAUTHORS:20150421-115848260},
 zbMATH = {6490569},
 Zbl = {1328.57010}
}

@incollection{zbMATH07442350,
 author = {McCoy, Duncan},
 title = {Null-homologous twisting and the algebraic genus},
 booktitle = {2019--20 MATRIX annals},
 isbn = {978-3-030-62496-5; 978-3-030-62499-6; 978-3-030-62497-2},
 pages = {147--165},
 year = {2021},
 publisher = {Cham: Springer},
 language = {English},
 doi = {10.1007/978-3-030-62497-2_7},
 zbMATH = {7442350},
 Zbl = {1498.57006}
}

@article{zbMATH06361412,
 author = {Hom, Jennifer},
 title = {The knot {Floer} complex and the smooth concordance group},
 fjournal = {Commentarii Mathematici Helvetici},
 journal = {Comment. Math. Helv.},
 issn = {0010-2571},
 volume = {89},
 number = {3},
 pages = {537--570},
 year = {2014},
 language = {English},
 doi = {10.4171/CMH/326},
 zbMATH = {6361412},
 Zbl = {1312.57008}
}

@book{K3_list,
 editor = {Baykur, R. {\.I}. and Kirby, R.~ C. and Ruberman, D.},
 title = {{K3}: a new problem list in low-dimensional topology},
 fseries = {Mathematical Surveys and Monographs},
 series = {Math. Surv. Monogr.},
 issn = {0076-5376},
 volume = {295},
 isbn = {978-1-4704-8433-0; 978-1-4704-8528-3},
 year = {2026},
 publisher = {Providence, RI: American Mathematical Society (AMS)},
 language = {English},
 doi = {10.1090/surv/295},
 Zbl = {8160053}
}

@article{zbMATH08091067,
 author = {Marian, Mihai},
 title = {A remark on the {Lewark}-{Zibrowius} invariant},
 fjournal = {Pacific Journal of Mathematics},
 journal = {Pac. J. Math.},
 issn = {1945-5844},
 volume = {339},
 number = {1},
 pages = {191--200},
 year = {2025},
 language = {English},
 doi = {10.2140/pjm.2025.339.191},
 zbMATH = {8091067},
 Zbl = {1572.57029}
}

@article{zbMATH07880416,
 author = {Kjuchukova, Alexandra and Miller, Allison N. and Ray, Arunima and Sakall{\i}, S{\"u}meyra},
 title = {Slicing knots in definite 4-manifolds},
 fjournal = {Transactions of the American Mathematical Society},
 journal = {Trans. Am. Math. Soc.},
 issn = {0002-9947},
 volume = {377},
 number = {8},
 pages = {5905--5946},
 year = {2024},
 language = {English},
 doi = {10.1090/tran/9151},
 zbMATH = {7880416},
 Zbl = {1571.57013}
}

\end{document}